\newcommand{\impli}{\Rightarrow}
\newcommand{\Nat}{\mathbb{N}}
\newcommand{\cT}{\mathcal{T}}
\newcommand{\cS}{\mathcal{S}}
\def\epsilon{\varepsilon}
\newcommand{\sub}{\subseteq}
\newtheorem{theo}{Theorem}[section]
\newtheorem{lem}[theo]{Lemma}
\newtheorem{pro}[theo]{Proposition}
\newtheorem{cor}[theo]{Corollary}
\newtheorem{defi}[theo]{Definition}
\newtheorem{fact}[theo]{Fact}
\newtheorem{rem}[theo]{Remark}
\newtheorem{problem}[theo]{Problem}
\numberwithin{equation}{section}
\author{Jos\'{e} Rodr\'{i}guez}
\address{Dpto. de Ingenier\'{i}a y Tecnolog\'{i}a de Computadores,
Facultad de Inform\'{a}tica, Universidad de Murcia, 30100 Espinardo (Murcia), Spain} \email{joserr@um.es}
\subjclass[2010]{46B50, 46G10}
\keywords{Banach space; Lebesgue-Bochner space; weakly compact set; $\delta\cS$-set; strongly weakly compactly generated space}
\thanks{Research supported by Ministerio de Econom\'{i}a y Competitividad and 
FEDER (project MTM2014-54182-P). This work was also supported by the 
research project 19275/PI/14 funded by Fundaci\'{o}n S\'{e}neca - Agencia de Ciencia y Tecnolog\'{i}a 
de la Regi\'{o}n de Murcia within the framework of PCTIRM 2011-2014.}
\title{A class of weakly compact sets in Lebesgue-Bochner spaces}
\begin{document}

\begin{abstract}
Let $X$ be a Banach space and $\mu$ a probability measure. A set $K \sub L^1(\mu,X)$ is said to be a 
$\delta\mathcal{S}$-set if it is uniformly integrable and for every $\delta>0$ there is a weakly compact set $W \sub X$ such that
$\mu(f^{-1}(W)) \geq 1-\delta$ for every $f\in K$. This is a sufficient, but in general non necessary, 
condition for relative weak compactness in $L^1(\mu,X)$. 
We say that $X$ has property ($\delta\cS_\mu$) if every relatively weakly compact subset of $L^1(\mu,X)$ is a $\delta\mathcal{S}$-set.
In this paper we study $\delta\mathcal{S}$-sets and Banach spaces having property ($\delta\cS_\mu$).
We show that testing on uniformly bounded sets is enough to check this property. 
New examples of spaces having property ($\delta\cS_\mu$) are provided.
Special attention is paid to the relationship with strongly weakly compactly generated (SWCG) spaces.
In particular, we show an example of a SWCG (in fact, separable Schur) space failing property 
($\delta\mathcal{S}_\mu$) when $\mu$ is the Lebesgue measure on $[0,1]$.
\end{abstract}

\maketitle

\section{Introduction}

Weak compactness is perhaps one of the most interesting topics in the theory of Lebesgue-Bochner spaces. 
Many efforts to understand weakly compact sets in such spaces were done prior the striking characterization found by 
\"{U}lger~\cite{ulg} and Diestel, Ruess and Schachermayer~\cite{die-alt2}. One of those attempts was based on
the natural idea of generating a weakly compact set of vector-valued functions from a weakly compact set of the Banach space in the range.
This idea was developed in \cite{bat-hie,bou-JJ,die4} leading to the notion of {\em $\delta\mathcal{S}$-set}.
Throughout this paper $X$ is a Banach space and $(\Omega,\Sigma,\mu)$ is a probability space.

\begin{defi}\label{defi:deltaS-set}
A set $K \sub L^1(\mu,X)$ is said to be a $\delta\mathcal{S}$-set if it is uniformly integrable and
for every $\delta>0$ there exists a weakly compact set $W \sub X$ such that
$\mu(f^{-1}(W))\geq 1-\delta$ for every $f\in K$.
\end{defi}

The following result was essentially proved in \cite{die4}:

\begin{theo}[Diestel]\label{theo:Diestel}
Every $\delta\mathcal{S}$-set of~$L^1(\mu,X)$ is relatively weakly compact.
\end{theo}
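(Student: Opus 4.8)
The plan is to deduce relative weak compactness from a Grothendieck-type approximation principle: a bounded subset $K$ of a Banach space $E$ is relatively weakly compact as soon as, for each $\epsilon>0$, one has $K\sub C_\epsilon+\epsilon B_E$ for some weakly compact $C_\epsilon\sub E$. I would establish this principle first by passing to the bidual. Regard $E\sub E^{**}$; since $K$ is bounded, its weak$^*$-closure $\overline{K}^{\,w^*}$ in $E^{**}$ is weak$^*$-compact. Now $C_\epsilon$, being weakly compact in $E$, is weak$^*$-compact in $E^{**}$, and $C_\epsilon+\epsilon B_{E^{**}}$ is weak$^*$-compact (the image of $C_\epsilon\times\epsilon B_{E^{**}}$ under addition), hence weak$^*$-closed. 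From $K\sub C_\epsilon+\epsilon B_E\sub C_\epsilon+\epsilon B_{E^{**}}$ we get $\overline{K}^{\,w^*}\sub C_\epsilon+\epsilon B_{E^{**}}$, so $\mathrm{dist}(\xi,E)\le\epsilon$ for every $\xi\in\overline{K}^{\,w^*}$. Letting $\epsilon\to0$ forces $\overline{K}^{\,w^*}\sub E$, which means $K$ is relatively weakly compact.

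Next I would produce the approximating weakly compact sets for $E=L^1(\mu,X)$, where $K$ is a $\delta\cS$-set (in particular bounded). Fix $\epsilon>0$. Uniform integrability of $K$ yields $\delta>0$ such that $\int_A\|f\|\,d\mu<\epsilon$ for all $f\in K$ whenever $\mu(A)<\delta$, and the $\delta\cS$-property supplies a weakly compact $W\sub X$ with $\mu(f^{-1}(W))\geq1-\delta$ for every $f\in K$. Replacing $W$ by its closed absolutely convex hull (still weakly compact by Krein--\v{S}mulian, and now containing $0$), each $f\in K$ splits as $f=f\mathbf 1_{f^{-1}(W)}+f\mathbf 1_{\Omega\setminus f^{-1}(W)}$: the first term takes values in $W$ everywhere, while the second has $L^1$-norm $<\epsilon$ since $\mu(\Omega\setminus f^{-1}(W))\le\delta$. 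Hence $K\sub\mathcal F_W+\epsilon B_{L^1(\mu,X)}$, where $\mathcal F_W:=\{g\in L^1(\mu,X):g(\omega)\in W\text{ a.e.}\}$.

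The crux is therefore the lemma that $\mathcal F_W$ is weakly compact for $W$ weakly compact, convex and balanced. I would invoke the Davis--Figiel--Johnson--Pe\l czy\'nski factorization to obtain a reflexive space $Y$ and a bounded injection $j\colon Y\to X$ with $W\sub j(B_Y)$. On the weakly compact set $B_Y$ the map $j$ is a weak-to-weak homeomorphism onto $j(B_Y)$ (a continuous bijection from a compact space onto a Hausdorff space), so $j^{-1}$ is weak-to-weak Borel; composing with $f$ and applying the Pettis measurability theorem lets me lift each $f\in\mathcal F_W$ to a strongly measurable $g\in L^1(\mu,Y)$ with $\|g(\omega)\|_Y\le1$ a.e. Such $g$ form a norm-closed convex subset of $B_{L^2(\mu,Y)}$, which is weakly compact because $L^2(\mu,Y)$ is reflexive ($Y$ reflexive, $1<2<\infty$); the continuous maps $L^2(\mu,Y)\hookrightarrow L^1(\mu,Y)\xrightarrow{\,g\mapsto j\circ g\,}L^1(\mu,X)$ are weak-to-weak continuous, so they carry this weakly compact set onto a weakly compact superset of $\mathcal F_W$. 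As $\mathcal F_W$ is convex and $L^1$-closed, it is itself weakly compact, and combining with the first two paragraphs yields the theorem.

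I expect the measurable-lifting step to be the main obstacle: one must verify that $g=j^{-1}\circ f$ is genuinely strongly measurable and square-integrably $Y$-valued, and this is precisely where the weak-topology homeomorphism on $B_Y$, the Pettis measurability theorem, and the essential separability of the ranges involved all have to be marshalled carefully.
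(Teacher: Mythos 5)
Your proposal is correct, but note that the paper itself contains no proof of Theorem~\ref{theo:Diestel}: it attributes the result to Diestel and simply cites \cite{die4}, so the comparison here is with that classical argument rather than with anything written out in the text. Your argument is essentially a reconstruction of Diestel's proof, and its pieces line up exactly with the machinery the paper develops elsewhere: your first paragraph is the standard bidual proof of Fact~\ref{fact:Grothendieck} (Grothendieck's lemma); your second paragraph, which splits each $f\in K$ as $f\chi_{f^{-1}(W)}+f\chi_{\Omega\setminus f^{-1}(W)}$ to obtain $K\sub L(W)+\epsilon B_{L^1(\mu,X)}$ with $L(W)$ as in \eqref{eqn:LC}, is precisely implication (i)$\impli$(ii) of Proposition~\ref{pro:GrothendieckStable}; and your third paragraph (Davis--Figiel--Johnson--Pe\l czy\'nski factorization through a reflexive $Y$, lifting $L(W)$ into the unit-ball-valued part of $L^2(\mu,Y)$, and pushing forward by weak-to-weak continuous operators) is the core of \cite{die4}. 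The measurability step you flag as the main obstacle does go through along the lines you indicate: since $j^{-1}$ is a weak-to-weak homeomorphism on $j(B_Y)$, each $y^*\circ j^{-1}$ is weakly, hence norm, continuous there, so $g=j^{-1}\circ f$ is weakly measurable; moreover the essential range of $f$ is norm-separable, so its image under $j^{-1}$ is weakly separable and therefore contained in the closed linear span of a countable set, making $g$ essentially separably valued; Pettis's measurability theorem then yields strong measurability, and $\|g(\cdot)\|_Y\le 1$ a.e.\ gives the square-integrability for free. So your proof is complete modulo these standard verifications, and it is the same route as the cited source.
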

     
The converse of Theorem~\ref{theo:Diestel} is true if $\mu$ is purely atomic and also, for an arbitrary probability~$\mu$, whenever
$X$ is reflexive or $X=L^1(\nu)$ for any non-negative measure~$\nu$, see \cite[Theorem~4.3]{bat-hie} (cf. \cite[Theorem~14]{bou-JJ}).
A typical example of relatively weakly compact but non $\delta\cS$-set is $\{r_n x_n:n\in \Nat\} \sub L^1([0,1],X)$, where
$(r_n)$ is the sequence of Rademacher functions and $(x_n)$ is a weakly Cauchy sequence in~$X$ which is not weakly convergent
(see \cite[Proposition~4.1]{bat-hie}); this construction can be done whenever $X$ is not weakly sequentially complete (WSC for short). 
A nice example of Batt and Hiermeyer showed that the converse of Theorem~\ref{theo:Diestel} may also fail for a WSC space (see \cite[Section~3]{bat-hie}). 
  
Our motivation to study $\delta\cS$-sets comes from several attempts to solve an open problem of Schl\"{u}chtermann and Wheeler~\cite{sch-whe} asking
if the property of being strongly weakly compactly generated (SWCG for short) passes from $X$ to $L^1(\mu,X)$, see \cite{laj-rod,laj-rod-2,rod13}. To be more
precise we need a definition:

\begin{defi}\label{defi:StronglyGenerated}  
A family $\mathcal{C}$ of subsets of a Banach space~$Z$ is said to be strongly generated if there exists $C_0 \in \mathcal{C}$ such that, 
for every $C \in \mathcal{C}$ and every $\varepsilon>0$, there is $n\in \mathbb{N}$ such that $C \subseteq nC_0 + \varepsilon B_Z$.
\end{defi}

A Banach space is called SWCG if the family of its weakly compact sets is strongly generated. This class of spaces is included in that of WSC spaces 
and contains all reflexive spaces, all $L^1$-spaces of a probability measure and all separable Schur spaces. It is not difficult
to check that $X$ is SWCG if and only if the family of all $\delta\cS$-sets of $L^1(\mu,X)$ is strongly generated (see \cite[Theorem~2.7]{rod13}).
Thus, in \cite[Problem~2.10]{rod13} we raised the following question:

\begin{problem}\label{problem:JR}
Suppose that $X$ is SWCG. Is every relatively weakly compact subset of $L^1(\mu,X)$ a $\delta\cS$-set?
\end{problem}

In Section~\ref{section:examples} we study the connection between SWCG spaces and $\delta\cS$-sets.
We show that Problem~\ref{problem:JR} has negative answer in general. The example is based on
the construction of Talagrand~\cite{tal15} of a separable Schur dual Banach space $E^*$ for which
$L^1([0,1],E^*)$ fails the Dunford-Pettis property. We also revisit the aforementioned example
of Batt and Hiermayer, and provide yet another proof that their Banach space is not SWCG (see \cite[Example~2.6]{sch-whe} and
\cite[4.6]{sch-whe-2}), neither a subspace of a SWCG space (see \cite[Example~2.9]{mer-sta-2}).

In Section~\ref{section:results} we give some further results on $\delta\cS$-sets and Banach spaces having 
the following property:

\begin{defi}\label{defi:DeltaS-space}
We say that $X$ has property ($\delta\cS_\mu$) if every relatively weakly compact subset of~$L^1(\mu,X)$ is a $\delta\mathcal{S}$-set.
\end{defi}

Testing on uniformly bounded sets is enough to check this property (Theorem~\ref{theo:UniformlyBounded}). We also 
prove its stability by countable $\ell^r$-sums for any $1\leq r < \infty$  (Theorem~\ref{theo:StabilitySums}).
We finish the paper with some remarks on the concept of $\delta\cS$-set in the space $L^p(\mu,X)$ for $1<p<\infty$.

\subsection*{Terminology}
Our Banach spaces are over the real field.  
Given a Banach space~$Z$, its norm is denoted by $\|\cdot\|_Z$. 
The symbol $B_Z$ stands for the closed unit ball of~$Z$. 
The topological dual of~$Z$ is denoted by~$Z^*$. By a {\em subspace} of~$Z$ we mean a closed linear subspace.
We say that a Banach space~$Y$ is a subspace of~$Z$ if it is isomorphic to a subspace of~$Z$.
An {\em operator} is a linear continuous map between Banach spaces.

Given $A\in \Sigma$, its characteristic (or indicator) function is denoted by $\chi_A$. We write 
$\Sigma_A:=\{B\in \Sigma:B \sub A\}$ for the trace $\sigma$-algebra on~$A$. If $\mu(A)>0$, then
$\mu_A$ denotes the probability measure on~$\Sigma_A$ defined by $\mu_A(B):=\mu(B)/\mu(A)$ for all $B\in \Sigma_A$.

A function $f:\Omega\to X$ is called:
\begin{itemize}
\item {\em simple} if $f=\sum_{i=1}^n \chi_{A_i}x_i$, where $n\in \Nat$, $x_i\in X$ and $A_i\in \Sigma$;
\item {\it strongly measurable} if there is a sequence of simple functions $f_n:\Omega\to X$ 
converging to~$f$ $\mu$-a.e.;
\item {\em Bochner integrable} if it is strongly measurable and $\int_\Omega \|f(\cdot)\|_X \, d\mu<\infty$.
\end{itemize}

We denote by $L^1(\mu,X)$ the Banach space of all (equivalence classes of) Bochner integrable functions $f:\Omega\to X$, equipped with the norm 
$$
	\|f\|_{L^1(\mu,X)}=\int_\Omega \|f(\cdot)\|_X \, d\mu.
$$
The monographs \cite{cem-men,die-uhl-J,lin-J} are excellent sources of information on the {\em Lebesgue-Bochner space} $L^1(\mu,X)$. 
As usual, we denote in the same way a single Bochner integrable function and its equivalence class in~$L^1(\mu,X)$.
Any strongly measurable function coincides $\mu$-a.e. with a $\Sigma$-${\rm Borel}(X)$ measurable function. In particular,
each $f\in L^1(\mu,X)$ induces a probability measure $\mu_f$ on ${\rm Borel}(X)$ via the formula 
$$
	\mu_f(C):=\mu(f^{-1}(C)).
$$

Recall that a set $H\sub L^1(\mu,X)$ is said to be {\em uniformly integrable} if it is bounded
and for every $\epsilon>0$ there is $\delta>0$ such that $\sup_{f\in H}\int_A \|f(\cdot)\|_X \, d\mu\leq \epsilon$
for every $A\in \Sigma$ with $\mu(A)\leq\delta$. It is well-known
that every relatively weakly compact subset of $L^1(\mu,X)$ is uniformly integrable (see e.g. \cite[p.~104, Theorem~4]{die-uhl-J}).

We denote by $\lambda$ the Lebesgue measure on the $\sigma$-algebra~$\mathcal{L}$
of all Lebesgue measurable subsets of~$[0,1]$. As usual, we write $L^1([0,1],X)$ instead of $L^1(\lambda,X)$.

\section{SWCG spaces and $\delta\cS$-sets}\label{section:examples}

The results on the examples of Talagrand and Batt-Hiermeyer are included at the end of this section (see Subsections~\ref{subsection:Talagrand}
and~\ref{subsection:BH}). To deal with them we need some previous work on $\delta\cS$-sets, property~($\delta\cS_\mu$) and SWCG spaces.

Lemma~\ref{lem:MeasureAlgebra} below shows that property~($\delta\cS_{\mu}$)
depends only on the measure algebra of~$(\Omega,\Sigma,\mu)$. For complete information on measure algebras we refer to \cite{fre14}. 
In the proof of Lemma~\ref{lem:MeasureAlgebra} we will use the following standard result.

\begin{fact}\label{fact:approx}
Let $f:\Omega \to X$ be a strongly measurable function and $A\in \Sigma$. Then there is a sequence of simple functions $f_n:\Omega \to X$ such that:
\begin{enumerate}
\item[(i)] $f_n \to f$ $\mu$-a.e.;
\item[(ii)] $\|f_n(\omega)\|_X \leq 1/n+\|f(\omega)\|_X$ for every $\omega\in \Omega$ and every $n\in \Nat$;
\item[(iii)] $f_n(A) \sub f(A)\cup\{0\}$ for every $n\in \Nat$.
\end{enumerate}
\end{fact}

\begin{lem}\label{lem:MeasureAlgebra}
Let $(\Delta,\Theta,\nu)$ be a probability space whose measure algebra is isomorphic to that of $(\Omega,\Sigma,\mu)$.
Then there is an isometry $T:L^1(\mu,X)\to L^1(\nu,X)$ such that both $T$ and $T^{-1}$ preserve $\delta\cS$-sets.
In particular, $X$ has property ($\delta\cS_\mu$) if and only if it has property ($\delta\cS_{\nu}$).
\end{lem}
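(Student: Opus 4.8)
The plan is to construct the isometry $T$ directly from the measure-algebra isomorphism and then verify it transports the two defining conditions of a $\delta\cS$-set—uniform integrability and the ``concentration on weakly compact sets'' condition—in both directions.

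First I would recall that an isomorphism $\Phi$ between the measure algebras of $(\Omega,\Sigma,\mu)$ and $(\Delta,\Theta,\nu)$ is a Boolean isomorphism preserving measure, i.e. $\nu(\Phi([A]))=\mu([A])$ for every $[A]$ in the measure algebra. The standard first step is to build from $\Phi$ an isometry $T_0:L^1(\mu)\to L^1(\nu)$ on the scalar $L^1$-spaces by setting $T_0(\chi_A):=\chi_{\Phi(A)}$ on characteristic functions, extending linearly to simple functions (this is well defined and norm-preserving because $\Phi$ preserves measure), and extending by density to all of $L^1(\mu)$. One then tensorizes: define $T$ on $X$-valued simple functions by $T\big(\sum_{i=1}^n \chi_{A_i}x_i\big):=\sum_{i=1}^n \chi_{\Phi(A_i)}x_i$, check it is an isometry from simple functions in $L^1(\mu,X)$ into $L^1(\nu,X)$, and extend by density. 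Since $\Phi$ is an isomorphism, the inverse map $T^{-1}$ is built from $\Phi^{-1}$ in exactly the same way, so $T$ is a surjective isometry. The key structural feature to extract is that $T$ acts ``pointwise in the range'': for a simple function $f$ with values $x_1,\dots,x_n,0$, the function $Tf$ takes the same values (on the transported sets), so $Tf$ has the same distribution as $f$, meaning $\nu_{Tf}=\mu_f$ as Borel probability measures on~$X$.

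The heart of the argument is this distributional identity $\nu_{Tf}=\mu_f$, which I expect to extend from simple functions to all $f\in L^1(\mu,X)$. Granting it, both $\delta\cS$-conditions transfer immediately. For uniform integrability, note that $\int_\Omega \|f\|_X\,d\mu=\int_X \|x\|_X\,d\mu_f(x)$ depends only on the distribution, and more to the point the tail integrals controlling uniform integrability are determined by $\mu_f$; since $\nu_{Tf}=\mu_f$, a family $K$ is uniformly integrable iff $T(K)$ is. For the weak-compactness condition, given a weakly compact $W\sub X$ with $\mu(f^{-1}(W))\geq 1-\delta$ for all $f\in K$, the identity gives $\nu((Tf)^{-1}(W))=\nu_{Tf}(W)=\mu_f(W)=\mu(f^{-1}(W))\geq 1-\delta$, with the same $W$; so $T(K)$ is a $\delta\cS$-set, and symmetrically for $T^{-1}$. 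The final ``in particular'' clause then follows, once we also know $T$ carries relatively weakly compact sets to relatively weakly compact sets—which is automatic because $T$ is a surjective linear isometry, hence weak-to-weak homeomorphic.

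The main obstacle is upgrading the distributional identity from simple functions to general $f\in L^1(\mu,X)$, since for arbitrary $f$ the object $Tf$ is only defined as an $L^1$-limit and its pointwise values are not literally ``the same'' as those of $f$. This is exactly where Fact~\ref{fact:approx} is needed: I would approximate $f$ by simple functions $f_k$ satisfying (i)--(iii), so that $f_k\to f$ $\mu$-a.e.\ with a uniform domination $\|f_k(\cdot)\|_X\leq 1/k+\|f(\cdot)\|_X$, giving $f_k\to f$ in $L^1(\mu,X)$ and hence $Tf_k\to Tf$ in $L^1(\nu,X)$. Passing to a further subsequence we may assume $Tf_k\to Tf$ $\nu$-a.e.\ as well. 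Then for every bounded continuous (or every closed) test set / function on $X$ one compares $\int \varphi\,d\nu_{Tf}=\lim_k\int \varphi\,d\nu_{Tf_k}=\lim_k\int \varphi\,d\mu_{f_k}=\int\varphi\,d\mu_f$, where the middle equality is the already-established identity for simple functions and the outer two use dominated/bounded convergence together with the a.e.\ convergences. Property~(iii), which keeps the ranges of the $f_k$ inside $f(A)\cup\{0\}$, is the technical lever ensuring no mass escapes to spurious values and that the measure $W$ captured on the $f_k$ side is honestly captured on the $f$ side; handling this approximation carefully, rather than the algebraic bookkeeping of defining $T$, is the part that requires genuine attention.
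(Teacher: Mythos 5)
Your proposal is correct, and its skeleton matches the paper's proof: the same induced isometry $T$ sending $\sum_i \chi_{A_i}x_i$ to $\sum_i \chi_{\Phi(A_i)}x_i$, the same key identity $\nu_{T(f)}=\mu_f$ for simple $f$, and the same approximation scheme via Fact~\ref{fact:approx}. Where you genuinely diverge is in the limiting step. You upgrade the identity to \emph{all} $f\in L^1(\mu,X)$ by testing against bounded continuous functions (a portmanteau-type argument: $f_k\to f$ $\mu$-a.e., $T(f_k)\to T(f)$ $\nu$-a.e.\ along a subsequence, bounded convergence on both sides, then invoke that finite Borel measures on a metric space agreeing on all bounded continuous functions coincide). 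This yields a stronger intermediate result --- $T$ preserves distributions exactly --- from which uniform integrability transfer, both directions of the $\delta\cS$-transfer (with the \emph{same} $W$, since weakly compact sets are norm-closed hence Borel), and the ``in particular'' clause all fall out at once. The paper instead avoids any appeal to measure-determination by $C_b$: it fixes $W$ (arranging $0\in W$), uses property (iii) of Fact~\ref{fact:approx} to ensure the simple approximants already satisfy $\mu_{f_n}(X\setminus W)\leq\delta$, and then gets only the one-sided inequality $\nu_{T(f)}(X\setminus W)\leq \liminf_n \nu_{T(f_n)}(X\setminus W)\leq\delta$ from lower semicontinuity of the indicator of the open set $X\setminus W$ plus Fatou. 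Your route is cleaner and gives more; the paper's is more self-contained, needing nothing beyond Fatou's lemma.

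One small inaccuracy in your last paragraph: property (iii) of Fact~\ref{fact:approx} is \emph{not} the lever in your own argument --- the bounded-continuous-function comparison uses only (i) and (ii), and ``no mass escaping'' is automatic once $\nu_{T(f)}=\mu_f$ is established. Property (iii) is precisely what the \emph{paper's} argument needs, since working with a fixed $W$ and an inequality requires the approximants' distributions to be concentrated on that same $W$ from the start.
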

\begin{proof}
The isomorphism between the measure algebras induces in a natural way an isometry $T:L^1(\mu,X)\to L^1(\nu,X)$ satisfying: 
\begin{enumerate}
\item[(i)] a set $K \sub L^1(\mu,X)$ is uniformly integrable if and only if $T(K)$ is;
\item[(ii)] $\nu_{T(f)}=\mu_{f}$ for every simple $f\in L^1(\mu,X)$.
\end{enumerate}

We claim that  $T(K)$ is a $\delta\cS$-set of $L^1(\nu,X)$ whenever $K \sub L^1(\mu,X)$ is a $\delta\cS$-set. Indeed,
$T(K)$ is uniformly integrable by~(i). Fix $\delta>0$ and a weakly compact set $W \sub X$ such that $\sup_{f\in K}\mu_f(X\setminus W)\leq \delta$.
Of course, we can assume that $0\in W$.
Take any $f\in K$. We will prove that $\nu_{T(f)}(X\setminus W)\leq \delta$. 
By Fact~\ref{fact:approx}, there is a sequence $(f_n)$ of simple functions converging to~$f$ in the norm of~$L^1(\mu,X)$ such that
$\mu_{f_n}(X\setminus W)\leq \delta$ for all $n\in \Nat$. Then (ii) implies that
$\nu_{T(f_n)}(X\setminus W) \leq \delta$ for all $n\in \Nat$. On the other hand, we also have $\|T(f_n)-T(f)\|_{L^1(\nu,X)}\to 0$ and so, by passing to a subsequence, 
we can assume further that $(T(f_n))$ converges to~$T(f)$ $\nu$-a.e. Since $X\setminus W$ is open, the function
$g:X \to \mathbb{R}$ defined by	
$$
	g(x):=\begin{cases}
	1 & \text{if $x\in X\setminus W$} \\
	0 & \text{if $x\in W$}
	\end{cases}
$$ 
is lower semicontinuous and so
$$
	g\circ T(f) \leq \liminf_{n\to \infty} \, g\circ T(f_n) \quad \nu\mbox{-a.e.}
$$
By integrating and applying Fatou's lemma, we get
\begin{multline*}
	\nu_{T(f)}(X\setminus W)=\int_\Delta g\circ T(f) \, d\nu \leq \int_\Delta \liminf_{n\to \infty} \, g\circ T(f_n) \, d\nu \\ \leq 
	\liminf_{n\to \infty}\, \int_\Delta g\circ T(f_n) \, d\nu =
	\liminf_{n\to \infty} \, \nu_{T(f_n)}(X\setminus W)\leq \delta.
\end{multline*}
This proves that $T(K)$ is a $\delta\cS$-set, as claimed.

By a symmetric argument, we also have that $T^{-1}(K')$ is a $\delta\cS$-set of $L^1(\mu,X)$
whenever $K' \sub L^1(\nu,X)$ is a $\delta\cS$-set.
\end{proof}

A probability space/measure is said to be {\em separable} if its $L^1$-space is separable or, equivalently, its
measure algebra equipped with the Fr\'{e}chet-Nikod\'{y}m metric is separable. A 
well-known result of Carath\'{e}odory states that the measure algebra of any non-atomic separable probability space is
isomorphic to the measure algebra of $([0,1],\mathcal{L},\lambda)$ (see e.g. \cite[Theorem~15.3.4]{roy-J}).

\begin{cor}\label{cor:SeparableAndSmaller}
The following assertions hold:
\begin{enumerate}
\item[(i)] If $\mu$ is separable and $X$ has property~($\delta\cS_\lambda$), then $X$ has property~($\delta\cS_{\mu}$).
\item[(ii)] If $\mu$ is not purely atomic and $X$ has property~($\delta\cS_\mu$), then $X$ has property~($\delta\cS_{\lambda}$).
\end{enumerate}
\end{cor}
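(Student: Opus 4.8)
The plan is to isolate a single monotonicity principle for property~($\delta\cS$) and then feed it the appropriate measure-algebra realizations, so that both assertions reduce to Lemma~\ref{lem:MeasureAlgebra} together with Carath\'{e}odory's theorem. The key auxiliary step is that property~($\delta\cS$) \emph{passes to sub-$\sigma$-algebras}: if $\Sigma_0 \sub \Sigma$ is a sub-$\sigma$-algebra, $\mu_0:=\mu|_{\Sigma_0}$, and $X$ has property~($\delta\cS_\mu$), then $X$ has property~($\delta\cS_{\mu_0}$). To see this I would use the canonical isometric inclusion $\iota\colon L^1(\mu_0,X)\to L^1(\mu,X)$; being a bounded operator it carries a relatively weakly compact $K\sub L^1(\mu_0,X)$ to a relatively weakly compact set, which is a $\delta\cS$-set by hypothesis. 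The point is then to pull the witnesses back: a weakly compact $W\sub X$ is norm-closed, hence Borel, and a $\Sigma_0$-measurable representative of $f\in K$ satisfies $f^{-1}(W)\in\Sigma_0$ with $\mu_0(f^{-1}(W))=\mu(f^{-1}(W))$, so the $\delta\cS$-estimate and the uniform integrability for $\iota(K)$ transfer verbatim to $K$. I would record the companion fact that property~($\delta\cS$) also descends to a set $A\in\Sigma$ of positive measure, i.e. $\delta\cS_\mu \Rightarrow \delta\cS_{\mu_A}$: here one uses the scaled extend-by-zero map $J\colon L^1(\mu_A,X)\to L^1(\mu,X)$, $J(f)=f\chi_A$, which again preserves relative weak compactness, and after enlarging a weakly compact witness so that $0\in W$, an estimate $\mu((Jf)^{-1}(W))\geq 1-\delta$ becomes $\mu_A(f^{-1}(W))\geq 1-\delta/\mu(A)$; since $\delta>0$ is arbitrary, this yields the $\delta\cS$-property for $K$.

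For~(i), separability of $\mu$ means its measure algebra is, by Carath\'{e}odory's theorem applied to the non-atomic part together with a bijection of the (countably many) atoms, isomorphic to that of $\lambda|_{\Sigma_0}$ for a suitable sub-$\sigma$-algebra $\Sigma_0\sub\mathcal{L}$. Concretely I would split $[0,1]$ into a Borel set of measure equal to the non-atomic mass of $\mu$ and intervals whose lengths are the atom masses, and let $\Sigma_0$ be generated by all Borel subsets of the first piece together with the remaining intervals taken as indivisible blocks. The sub-$\sigma$-algebra fact gives $\delta\cS_\lambda \Rightarrow \delta\cS_{\lambda|_{\Sigma_0}}$, and Lemma~\ref{lem:MeasureAlgebra} gives $\delta\cS_{\lambda|_{\Sigma_0}} \Leftrightarrow \delta\cS_\mu$, whence the conclusion.

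For~(ii), since $\mu$ is not purely atomic I would first choose $A\in\Sigma$ with $\mu(A)>0$ such that $\mu_A$ is non-atomic, so that $X$ has property~($\delta\cS_{\mu_A}$) by the companion fact. Inside the non-atomic space $(\Omega,\Sigma_A,\mu_A)$ I would manufacture a countably generated sub-$\sigma$-algebra $\Sigma_0$ by repeated halving, using Sierpi\'{n}ski's theorem that a non-atomic measure attains all intermediate values: split off a set of measure $1/2$, then halve each piece, and so on. The resulting dyadic algebra $\Sigma_0$ is separable and non-atomic, hence isomorphic to the Lebesgue algebra by Carath\'{e}odory's theorem. Applying the sub-$\sigma$-algebra fact to $\Sigma_0\sub\Sigma_A$ yields property~($\delta\cS_{\mu_A|_{\Sigma_0}}$), and Lemma~\ref{lem:MeasureAlgebra} converts this into property~($\delta\cS_\lambda$).

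The conceptual heart is the monotonicity lemma; once it is available, each assertion is a matter of exhibiting the right sub-algebra. The subtlest point lies in~(ii): the non-atomic space $\mu_A$ need not be separable, so Carath\'{e}odory's theorem does not apply to it directly, which is precisely why one must first carve out a countably generated non-atomic sub-algebra by the halving construction before invoking it. The only technical care needed elsewhere is to confirm, in the monotonicity lemma, that $f^{-1}(W)$ genuinely lies in the sub-$\sigma$-algebra, so that the transferred $\delta\cS$-estimates are literally statements about $\mu_0$ rather than about $\mu$.
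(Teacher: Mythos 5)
Your argument is correct, and it shares the paper's two essential ingredients --- Lemma~\ref{lem:MeasureAlgebra} and Carath\'{e}odory's theorem --- but it is organized differently, most visibly in part~(i). Your monotonicity lemma (descent of property~($\delta\cS$) to sub-$\sigma$-algebras and to restrictions $\mu_A$) is sound: the canonical embeddings are bounded operators, hence preserve relative weak compactness, and the witness sets pull back because weakly compact sets are norm-closed, so $f^{-1}(W)$ is (up to null sets) measurable with respect to the smaller $\sigma$-algebra. For part~(ii) your proof is essentially the paper's: the paper also fixes $A$ with $\mu_A$ non-atomic, takes a sub-$\sigma$-algebra $\Sigma_0\sub\Sigma_A$ with $\mu_A|_{\Sigma_0}$ non-atomic and separable, and pulls the $\delta\cS$-property back through the extend-by-zero embedding $T:L^1(\mu_A|_{\Sigma_0},X)\to L^1(\mu,X)$; you merely factor $T$ through $L^1(\mu_A,X)$ and supply the details (the dyadic halving construction, the explicit transfer of witnesses with the $\delta/\mu(A)$ rescaling) that the paper leaves as ``easy to check''. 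Part~(i) is where you genuinely diverge: the paper decomposes $\Omega=A_1\cup A_2$ into a non-atomic part and a purely atomic part, uses that purely atomic measures satisfy the property automatically (the converse of Theorem~\ref{theo:Diestel} holds when $\mu$ is purely atomic), and then recombines the two pieces; you instead go in the opposite direction, realizing the whole measure algebra of $\mu$ (non-atomic part via Carath\'{e}odory, atoms as indivisible intervals) as a sub-$\sigma$-algebra $\Sigma_0\sub\mathcal{L}$ and descending via $\delta\cS_\lambda\Rightarrow\delta\cS_{\lambda|_{\Sigma_0}}\Leftrightarrow\delta\cS_\mu$. Your route buys uniformity --- both assertions become instances of a single monotonicity principle, the atomic part is absorbed with no separate argument, and no recombination of witnesses across the two pieces is needed --- at the cost of constructing the measure-algebra embedding explicitly; the paper's route is shorter for~(i) precisely because it can quote the purely atomic case for free.
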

\begin{proof}
Note first that Lemma~\ref{lem:MeasureAlgebra} and Carath\'{e}odory's theorem above imply that 
{\em properties ($\delta\cS_{\mu}$) and ($\delta\cS_{\lambda}$) are equivalent whenever $\mu$ is non-atomic and separable.}

(i): It only remains to consider the case in which $\mu$ is neither non-atomic nor purely atomic. 
Write $\Omega=A_1 \cup A_2$, where $A_i \in \Sigma$ with $\mu(A_i)>0$, $A_1\cap A_2=\emptyset$, $\mu_{A_1}$ is non-atomic
and $\mu_{A_2}$ is purely atomic. The later implies automatically that $X$ has property ($\delta\cS_{\mu_{A_2}}$).  
Since $\mu_{A_1}$ is non-atomic and separable, $X$ has property ($\delta\cS_{\mu_{A_1}}$). It is now
clear that $X$ has property ($\delta\cS_{\mu}$) as well.

(ii): Fix $A\in \Sigma$ with $\mu(A)>0$ such that $\mu_A$ is non-atomic. There is a sub-$\sigma$-algebra $\Sigma_0 \sub \Sigma_A$ such that the restriction $\mu_A|_{\Sigma_0}$
is non-atomic and separable. Let $T: L^1(\mu_A|_{\Sigma_0},X) \to L^1(\mu,X)$ be the isomorphic embedding
that maps each $f\in L^1(\mu_A|_{\Sigma_0},X)$ to the function $T(f)\in L^1(\mu,X)$ defined by 
$T(f)(\omega):=f(\omega)$ if $\omega\in A$, and $T(f)(\omega):=0$ if $\omega\in \Omega \setminus A$. It is easy to check that
$K \sub L^1(\mu_A|_{\Sigma_0},X)$ is a $\delta\cS$-set whenever $T(K) \sub L^1(\mu,X)$ is a $\delta\cS$-set. 
So, $X$ has property ($\delta\cS_{\mu_A|_{\Sigma_0}}$), which is equivalent to  ($\delta\cS_{\lambda}$).
\end{proof}

We next focus on Banach spaces which are subspaces of SWCG spaces. It is
important to stress that subspaces of SWCG spaces need not be SWCG, even in the separable case, see \cite[Section~3]{mer-sta-2}.

The following well-known fact is due to Grothendieck (see e.g. \cite[p.~227, Lemma~2]{die-J})
and will be used several times in the sequel.

\begin{fact}\label{fact:Grothendieck}
A set $W \sub X$ is relatively weakly compact if and only if for every $\epsilon>0$
there is a relatively weakly compact set $W_\epsilon\sub X$ such that $W \sub W_\epsilon +\epsilon B_X$.
\end{fact}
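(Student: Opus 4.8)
The forward implication is immediate: if $W$ is relatively weakly compact we may simply take $W_\epsilon:=W$ for every $\epsilon>0$. So the content lies in the converse, and the plan is to detect relative weak compactness inside the bidual $X^{**}$ equipped with the weak$^*$ topology $\sigma(X^{**},X^*)$. I would first recall the standard criterion: a bounded set $W \sub X$ is relatively weakly compact if and only if its weak$^*$ closure $\overline{W}^{\,w^*}$ in $X^{**}$ is contained in~$X$. (Indeed, on $X$ the topology $\sigma(X^{**},X^*)$ induces the weak topology, so if $\overline{W}^{\,w^*}\sub X$ then this weak$^*$-compact set coincides with the weak closure of~$W$ in~$X$.) Boundedness of~$W$ is free: from $W\sub W_1+B_X$ and the boundedness of the relatively weakly compact set~$W_1$ we get that $W$ is bounded, hence $\overline{W}^{\,w^*}$ is a genuine weak$^*$-compact subset of~$X^{**}$ by Banach--Alaoglu.

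The heart of the argument is the following containment, valid for each fixed $\epsilon>0$:
$$
\overline{W}^{\,w^*}\ \sub\ \overline{W_\epsilon}^{\,w^*}+\epsilon B_{X^{**}}\ \sub\ X+\epsilon B_{X^{**}}.
$$
The second inclusion holds because $W_\epsilon$ is relatively weakly compact, whence $\overline{W_\epsilon}^{\,w^*}\sub X$ by the criterion above. For the first inclusion I would start from $W\sub W_\epsilon+\epsilon B_X$, pass to weak$^*$ closures, and prove that
$$
\overline{W_\epsilon+\epsilon B_X}^{\,w^*}\ \sub\ \overline{W_\epsilon}^{\,w^*}+\epsilon B_{X^{**}}.
$$
Given $\phi$ in the left-hand side, choose a net $w_\alpha+\epsilon b_\alpha\to\phi$ with $w_\alpha\in W_\epsilon$ and $b_\alpha\in B_X$; using that $\overline{W_\epsilon}^{\,w^*}$ is weak$^*$-compact, pass to a subnet along which $w_\alpha\to\psi\in\overline{W_\epsilon}^{\,w^*}$, so that $\epsilon b_\alpha\to\phi-\psi$. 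Since $\epsilon B_{X^{**}}$ is weak$^*$-closed (again Banach--Alaoglu) and contains every $\epsilon b_\alpha$, we conclude $\phi-\psi\in\epsilon B_{X^{**}}$, i.e. $\phi\in\overline{W_\epsilon}^{\,w^*}+\epsilon B_{X^{**}}$.

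Granting this, every $\phi\in\overline{W}^{\,w^*}$ lies within distance~$\epsilon$ (in the bidual norm) of a point of~$X$, for every $\epsilon>0$; since $X$ is norm-closed in~$X^{**}$, this forces $\phi\in X$. Thus $\overline{W}^{\,w^*}\sub X$, and the criterion gives that $W$ is relatively weakly compact. The step I expect to be the main technical obstacle is the closure-of-a-sum inclusion displayed above: sums of weak$^*$-closed sets need not be closed, so it is essential to exploit the weak$^*$-\emph{compactness} of $\overline{W_\epsilon}^{\,w^*}$ (to extract the convergent subnet) together with the weak$^*$-closedness of the ball $\epsilon B_{X^{**}}$. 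Everything else --- boundedness, the bidual criterion, and the final appeal to completeness of~$X$ --- is routine.
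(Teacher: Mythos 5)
Your proof is correct, and it coincides with the standard argument for this fact: the paper itself offers no proof, citing instead \cite[p.~227, Lemma~2]{die-J}, where Grothendieck's lemma is proved exactly this way --- pass to weak$^*$ closures in $X^{**}$, use weak$^*$-compactness of $\overline{W_\epsilon}^{\,w^*}$ and weak$^*$-closedness of $\epsilon B_{X^{**}}$ to get $\overline{W}^{\,w^*}\sub X+\epsilon B_{X^{**}}$, and conclude from norm-closedness of $X$ in $X^{**}$. All the technical points you flag (boundedness, the subnet extraction for the sum, the bidual criterion) are handled correctly.
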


\begin{lem}\label{lem:CharacterizationDeltaS}
A set $K \sub L^1(\mu,X)$ is a $\delta\cS$-set if and only if it is uniformly integrable and 
for every $\delta>0$ and every $\epsilon>0$ there exists a weakly compact set $W \sub X$ such that
$\mu(f^{-1}(W+\epsilon B_X))\geq 1-\delta$ for every $f\in K$.
\end{lem}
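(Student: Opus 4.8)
The ``only if'' implication is immediate and requires no work: if $K$ is a $\delta\cS$-set and $\delta,\epsilon>0$ are given, the weakly compact set $W\sub X$ furnished by Definition~\ref{defi:deltaS-set} satisfies $f^{-1}(W)\sub f^{-1}(W+\epsilon B_X)$ for every $f\in K$, so $\mu(f^{-1}(W+\epsilon B_X))\geq \mu(f^{-1}(W))\geq 1-\delta$. The entire content of the statement is therefore the converse, and the plan is to prove it by a diagonal construction that collapses the two-parameter family of approximating sets into a single weakly compact set with the required absorbing property.

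So I would assume that $K$ is uniformly integrable and satisfies the weaker covering condition, and fix $\delta>0$. For each $n\in\Nat$ I would apply the hypothesis to the pair of parameters $(\delta/2^n,\,1/n)$, obtaining a weakly compact set $W_n \sub X$ such that $\mu(f^{-1}(W_n+\frac{1}{n} B_X))\geq 1-\delta/2^n$ for every $f\in K$. The natural candidate for the single witnessing set is then
$$ W:=\bigcap_{n\in\Nat}\Big(W_n+\tfrac{1}{n}\, B_X\Big), $$
and it remains to verify that $W$ is weakly compact and that it works uniformly over all $f\in K$ with the single tolerance $\delta$.

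For the weak compactness I would argue in two steps. First, $W$ is relatively weakly compact by Grothendieck's criterion (Fact~\ref{fact:Grothendieck}): given any $\epsilon>0$, choose $n$ with $1/n\leq \epsilon$; then $W \sub W_n+\frac{1}{n} B_X \sub W_n+\epsilon B_X$ with $W_n$ weakly compact. Second, $W$ is weakly closed, being the intersection of the sets $W_n+\frac{1}{n} B_X$, each of which is the sum of a weakly compact set and the norm-closed convex (hence, by Mazur, weakly closed) set $\frac{1}{n}B_X$; since the sum of a compact set and a closed set in a topological vector space is closed, each summand is weakly closed. As a weakly closed, relatively weakly compact set, $W$ is weakly compact. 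I expect the one genuinely non-formal point here to be the closedness of each summand $W_n+\frac{1}{n}B_X$, which legitimizes forming the intersection; everything surrounding it is bookkeeping.

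Finally, for the uniform measure estimate I would fix $f\in K$; since $W$ is weakly closed it is in particular Borel, so $\mu(f^{-1}(W))=\mu_f(W)$ is well defined, and
$$ \Omega\setminus f^{-1}(W)=\bigcup_{n\in\Nat}\Big(\Omega\setminus f^{-1}\big(W_n+\tfrac{1}{n} B_X\big)\Big). $$
By countable subadditivity and the choice of the sets $W_n$, the right-hand side has measure at most $\sum_{n\in\Nat}\delta/2^n=\delta$, so $\mu(f^{-1}(W))\geq 1-\delta$. Since $f\in K$ and $\delta>0$ were arbitrary, this shows $K$ is a $\delta\cS$-set.
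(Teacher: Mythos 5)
Your proof is correct and follows essentially the same route as the paper: apply the hypothesis along a sequence of parameters $(\delta/2^n,\epsilon_n)$ with $\epsilon_n\to 0$, intersect the sets $W_n+\epsilon_n B_X$, invoke Fact~\ref{fact:Grothendieck} for weak compactness, and finish with a union bound. The only differences are cosmetic -- you use $\epsilon_n=1/n$ where the paper uses $2^{-n}$, and you spell out the weak closedness of each summand $W_n+\epsilon_n B_X$ (compact plus closed), a point the paper leaves implicit when citing Grothendieck's criterion.
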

\begin{proof} We only have to prove the ``if'' part.
Fix $\delta>0$. Let $(W_n)$ be a sequence of weakly compact subsets of~$X$ such that 
$\mu(f^{-1}(W_n+2^{-n} B_X))\geq 1-\delta/2^{n}$ for every $f\in K$. 
Then 
$$
	W:=\bigcap_{n\in \Nat} \Big(W_n+\frac{1}{2^{n}}B_X\Big)
$$ 
is weakly compact (Fact~\ref{fact:Grothendieck})
and for each $f\in K$ we have 
$$
	\mu\big(\Omega \setminus f^{-1}(W)\big)
	\leq
	\sum_{n\in \Nat}
	\mu\Big(\Omega \setminus f^{-1}\Big(W_n+\frac{1}{2^{n}}B_X\Big)\Big) \leq \delta.
$$
This shows that $K$ is a $\delta\cS$-set.
\end{proof}

\begin{pro}\label{pro:SWCGcountable}
Suppose $X$ is a subspace of a SWCG space. Let $K \sub L^1(\mu,X)$. If every countable
subset of~$K$ is a $\delta\cS$-set, then $K$ is a $\delta\cS$-set.
\end{pro}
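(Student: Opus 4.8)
The plan is to convert the defining requirement of a $\delta\cS$-set --- which quantifies over the continuum of all weakly compact subsets of the range --- into a requirement quantifying over a single countable family, and then to extract a countable witness whenever $K$ fails. I would begin by noting that $K$ is automatically uniformly integrable: if $K$ were unbounded, or failed the uniform integrability estimate, one could select a countable sequence $(f_n)$ in $K$ realizing the failure (for instance $f_n\in K$ and $A_n\in\Sigma$ with $\mu(A_n)\le 1/n$ but $\int_{A_n}\|f_n(\cdot)\|_X\,d\mu>\epsilon$ for some fixed $\epsilon>0$); then $\{f_n:n\in\Nat\}$ would be a countable subset of $K$ that is not uniformly integrable, hence not a $\delta\cS$-set, contrary to the hypothesis.

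Next I would remove the obstruction that $X$ is only a \emph{subspace} of a SWCG space. Let $Z$ be SWCG with $X\sub Z$. Since $X$ is a closed subspace it is weakly closed in $Z$, so whenever $W\sub Z$ is weakly compact the set $W\cap X$ is weakly compact in $X$; as every $f\in K$ takes values in $X$, one has $\mu(f^{-1}(W))=\mu(f^{-1}(W\cap X))$. Consequently, for $K\sub L^1(\mu,X)$, being a $\delta\cS$-set of $L^1(\mu,X)$ is the same as being a $\delta\cS$-set of $L^1(\mu,Z)$, and likewise for every countable subset. It therefore suffices to prove the statement assuming $X$ itself is SWCG; fix a weakly compact $C_0\sub X$ strongly generating the weakly compact subsets of $X$ (Definition~\ref{defi:StronglyGenerated}).

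The key step is the reformulation: a uniformly integrable $K$ is a $\delta\cS$-set if and only if for every $\delta>0$ and every $\epsilon>0$ there is $n\in\Nat$ such that $\mu(f^{-1}(nC_0+\epsilon B_X))\ge 1-\delta$ for all $f\in K$. The ``only if'' direction uses strong generation: given the weakly compact $W$ furnished by the $\delta\cS$-set property for the pair $(\delta,\epsilon)$, choose $n$ with $W\sub nC_0+\epsilon B_X$, so that $\mu(f^{-1}(nC_0+\epsilon B_X))\ge\mu(f^{-1}(W))\ge 1-\delta$. The ``if'' direction is immediate from Lemma~\ref{lem:CharacterizationDeltaS} applied with the weakly compact set $W:=nC_0$. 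The whole point is that the quantifier ``for all weakly compact $W$'' has been replaced by the countable quantifier ``for all $n\in\Nat$''.

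Finally I would argue by contradiction. If $K$ is not a $\delta\cS$-set then, by the reformulation, there are $\delta_0,\epsilon_0>0$ such that for every $n\in\Nat$ one can pick $f_n\in K$ with $\mu(f_n^{-1}(nC_0+\epsilon_0 B_X))<1-\delta_0$. The countable set $\{f_n:n\in\Nat\}\sub K$ cannot be a $\delta\cS$-set: applying the reformulation to it with $(\delta_0,\epsilon_0)$ would produce some $m\in\Nat$ with $\mu(g^{-1}(mC_0+\epsilon_0 B_X))\ge 1-\delta_0$ for all $g$ in that set, contradicting the choice of $f_m$. This violates the hypothesis, so $K$ must be a $\delta\cS$-set. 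I expect the main obstacle to be precisely the subspace issue handled in the second step; once one is permitted to work with the single generator $C_0$, replacing the uncountable family of weakly compact sets by the countable family $\{nC_0:n\in\Nat\}$ is exactly what lets one countable subset witness any failure.
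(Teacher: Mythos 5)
Your proof is correct and follows essentially the same route as the paper: reduce to $X$ SWCG, use the strong generator to replace the uncountable family of weakly compact sets by the countable family $\{nC_0+\epsilon B_X:n\in\Nat\}$ via Lemma~\ref{lem:CharacterizationDeltaS}, and extract a countable witness sequence $(f_n)$ contradicting the hypothesis. The only differences are presentational: you spell out the reduction to the SWCG case and the uniform integrability step (which the paper treats as immediate), and you package the two uses of strong generation into an explicit ``iff'' reformulation rather than inlining them as the paper does.
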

\begin{proof}
Clearly, we can assume without loss of generality that $X$ is SWCG. Let $G \sub X$ be a weakly compact set which strongly generates 
the family of all weakly compact subsets of~$X$.
Since every countable subset of~$K$ is uniformly integrable, so is~$K$. By contradiction,
suppose that $K$ is not a $\delta\cS$-set. According to Lemma~\ref{lem:CharacterizationDeltaS},
there exist $\delta>0$, $\epsilon>0$ and a sequence $(f_n)$ in~$K$ such that 
$$
	\mu\big(f_n^{-1}(nG+\epsilon B_X)\big) <1-\delta
	\quad
	\mbox{for all }n\in \Nat.
$$
On the other hand, since every countable subset of~$K$ is a $\delta\cS$-set, there is a weakly compact set $W \sub X$ such that
$\mu(f_n^{-1}(W))\geq 1-\delta$ for all $n\in \Nat$. It follows that $W \not \subseteq nG+\epsilon B_X$
for all $n\in \Nat$, which contradicts the choice of~$G$. 
\end{proof}

\begin{cor}\label{cor:SWCGcountable}
Suppose that $X$ is a subspace of a SWCG space and that $\mu$ is not purely atomic. Then $X$ has property ($\delta\cS_\mu$) if and only if it has property ($\delta\cS_{\lambda}$).
\end{cor}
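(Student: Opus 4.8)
The plan is to prove the two implications separately; the hypotheses play complementary roles, with ``$\mu$ not purely atomic'' driving the forward direction and ``$X$ a subspace of a SWCG space'' driving the converse. The forward implication is immediate: if $X$ has property ($\delta\cS_\mu$), then since $\mu$ is not purely atomic, Corollary~\ref{cor:SeparableAndSmaller}(ii) applies verbatim and yields property ($\delta\cS_\lambda$). Note that this direction uses neither the SWCG hypothesis nor any further work, so the whole content of the statement lies in the converse.

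For the converse, assume $X$ has property ($\delta\cS_\lambda$) and is a subspace of a SWCG space, and let $K \sub L^1(\mu,X)$ be relatively weakly compact; I must show it is a $\delta\cS$-set. By Proposition~\ref{pro:SWCGcountable} it suffices to show that every \emph{countable} subset $K_0 \sub K$ is a $\delta\cS$-set. The idea is to pass to a separable measure where property ($\delta\cS_\lambda$) can be leveraged through Corollary~\ref{cor:SeparableAndSmaller}(i). Since $K_0$ is countable and each of its members is strongly measurable, there is a \emph{countably generated} sub-$\sigma$-algebra $\Sigma_0 \sub \Sigma$ with respect to which every $f\in K_0$ is (a.e.\ equal to) a strongly measurable function. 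Then $\nu:=\mu|_{\Sigma_0}$ is a separable probability measure, so Corollary~\ref{cor:SeparableAndSmaller}(i) gives that $X$ has property ($\delta\cS_\nu$).

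It remains to transfer between $L^1(\nu,X)$ and $L^1(\mu,X)$. The natural inclusion $J:L^1(\nu,X)\to L^1(\mu,X)$ sending a $\Sigma_0$-measurable function to its class in $L^1(\mu,X)$ is an isometry onto the closed subspace $Y$ of $\Sigma_0$-measurable functions, and $K_0=J(K_0')$ for some $K_0' \sub L^1(\nu,X)$. I would first check that $K_0'$ is relatively weakly compact in $L^1(\nu,X)$: since $Y$ is a closed (hence weakly closed) subspace, the weak closure of $K_0$ (weakly compact in $L^1(\mu,X)$) lies in $Y$, and the weak topology of $Y$ is inherited from that of $L^1(\mu,X)$, so $J^{-1}$ carries this weak closure to a weakly compact set. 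Thus property ($\delta\cS_\nu$) makes $K_0'$ a $\delta\cS$-set of $L^1(\nu,X)$. Converting back is free: for $\Sigma_0$-measurable $f$ and any weakly compact $W\sub X$ (which is norm closed, hence Borel) one has $f^{-1}(W)\in \Sigma_0$ and $\mu(f^{-1}(W))=\nu(f^{-1}(W))$, so a set $W$ witnessing the $\delta\cS$ condition for $K_0'$ witnesses it for $K_0$; and uniform integrability of $K_0$ comes for nothing, since $K_0\sub K$ is already uniformly integrable in $L^1(\mu,X)$. Hence $K_0$ is a $\delta\cS$-set, completing the argument via Proposition~\ref{pro:SWCGcountable}.

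I expect the main delicate point to be the transfer of relative weak compactness through $J$, that is, checking that viewing the countable family inside the smaller space $L^1(\nu,X)$ does not destroy relative weak compactness; this rests on $Y$ being a weakly closed subspace together with the Hahn--Banach fact that a closed subspace inherits the ambient weak topology, after which the distributional identity $\mu(f^{-1}(W))=\nu(f^{-1}(W))$ moves the $\delta\cS$ witnesses back and forth for free. The genuinely essential input, however, is Proposition~\ref{pro:SWCGcountable}: it is precisely what lets us reduce a possibly non-separable $\mu$ to a countably generated, hence separable, sub-$\sigma$-algebra on which Corollary~\ref{cor:SeparableAndSmaller}(i) becomes applicable, and this is where the SWCG hypothesis is used.
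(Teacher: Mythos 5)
Your proof is correct and follows essentially the same route as the paper: the forward direction via Corollary~\ref{cor:SeparableAndSmaller}(ii), and the converse by reducing to countable sets through Proposition~\ref{pro:SWCGcountable}, passing to a countably generated (hence separable) sub-$\sigma$-algebra, and applying Corollary~\ref{cor:SeparableAndSmaller}(i). In fact you make explicit two points the paper leaves implicit, namely that relative weak compactness survives the identification of $L^1(\mu|_{\Sigma_0},X)$ as a weakly closed subspace of $L^1(\mu,X)$, and that the $\delta\cS$ witnesses transfer back via $\mu(f^{-1}(W))=\nu(f^{-1}(W))$.
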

\begin{proof}
In view of Corollary~\ref{cor:SeparableAndSmaller}(ii) it only remains to prove the ``if'' part.  
By Proposition~\ref{pro:SWCGcountable}, in order to show that $X$ has property~($\delta\cS_\mu$)
it suffices to check that every {\em countable} relatively weakly compact set $K\sub L^1(\mu,X)$
is a $\delta\cS$-set. The fact that $K$ is countable ensures the 
existence of a sub-$\sigma$-algebra
$\Sigma_0 \sub \Sigma$ such that the restriction $\mu|_{\Sigma_0}$ is separable
and $K \sub L^1(\mu|_{\Sigma_0},X)$ (identified as a subspace of~$L^1(\mu,X)$).
Since $X$ has property~($\delta\cS_{\mu|_{\Sigma_0}}$) (by Corollary~\ref{cor:SeparableAndSmaller}(i)), $K$ is a $\delta\cS$-set 
of~$L^1(\mu|_{\Sigma_0},X)$, and so of~$L^1(\mu,X)$. 
\end{proof}

For any weakly compact set $W \sub X$ we have an obvious $\delta\mathcal{S}$-set, namely, 
\begin{equation}\label{eqn:LC}
    L(W):=\{f\in L^1(\mu,X): \, f(\omega)\in W \mbox{ for }\mu\mbox{-a.e. }\omega\in \Omega\}.
\end{equation}  
We next consider a generalization of such sets.
Let $wk(X)$ be the family of all weakly compact non-empty subsets of~$X$.
Given a multi-function $F:\Omega \to wk(X)$, we define
$$
	S^1(F):=\{f\in L^1(\mu,X): \, f(\omega)\in F(\omega) \mbox{ for }\mu\mbox{-a.e. }\omega\in \Omega\}.
$$
Any uniformly integrable subset of $S^1(F)$ is relatively weakly compact in $L^1(\mu,X)$,
see \cite[Corollary~2.6]{die-alt2}. The following result is implicit in the proof of \cite[Lemma~2]{laj-rod} --
we include a detailed proof for the reader's convenience. 

\begin{pro}\label{pro:LR}
Suppose $X$ is a subspace of a SWCG space. Let $F:\Omega \to wk(X)$ be a multi-function such that
\begin{equation}\label{eqn:meas}
	\{\omega \in \Omega: \ F(\omega)\sub C\}\in \Sigma
	\quad
	\mbox{for every convex closed set }C \sub X.
\end{equation}
Then: 
\begin{enumerate}
\item[(i)] For every $\delta>0$ there exist $A\in \Sigma$ with $\mu(A)\geq 1-\delta$
and a weakly compact set $W \sub X$ such that $F(\omega)\sub W$ for every $\omega\in A$.
\item[(ii)] Every uniformly integrable subset of $S^1(F)$ is a $\delta\cS$-set.
\end{enumerate}
\end{pro}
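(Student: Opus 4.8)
\emph{The plan is to derive everything from part~(i).} Granting~(i), part~(ii) is immediate: let $K\sub S^1(F)$ be uniformly integrable, fix $\delta>0$, and take $A\in\Sigma$ with $\mu(A)\geq 1-\delta$ and a weakly compact $W\sub X$ with $F(\omega)\sub W$ for all $\omega\in A$. For each $f\in K$ we have $f(\omega)\in F(\omega)\sub W$ for $\mu$-a.e.\ $\omega\in A$, whence $A\sub f^{-1}(W)$ up to a $\mu$-null set and $\mu(f^{-1}(W))\geq\mu(A)\geq 1-\delta$. Since $K$ is uniformly integrable by hypothesis, this exhibits $K$ as a $\delta\cS$-set. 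So all the content lies in part~(i).

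\emph{For part~(i) I would first set up a good generating set.} As in the proof of Proposition~\ref{pro:SWCGcountable} one reduces to the case that $X$ itself is SWCG (the hypothesis~\eqref{eqn:meas} transfers to the ambient space because intersecting a convex closed set with~$X$ again yields a convex closed subset of~$X$, and weak compactness is unaffected by passing between $X$ and the ambient space). Let $G\sub X$ strongly generate the weakly compact subsets of~$X$. Replacing $G$ by the closed convex hull of $G\cup\{0\}$---still weakly compact by Krein--\v{S}mulian and still strongly generating---I may assume $G$ is convex and weakly compact with $0\in G$, so that $nG\sub(n+1)G$ for all~$n$.

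\emph{The crucial point is a measurability reduction.} For each $n\in\Nat$ and $\epsilon>0$ the set $nG+\epsilon B_X$ is convex (a sum of convex sets) and weakly closed: for the weak topology, the sum of the weakly compact set $nG$ and the weakly closed set $\epsilon B_X$ is again weakly closed. Being convex and closed, it is eligible for~\eqref{eqn:meas}, so
$$
	\Omega_{n,\epsilon}:=\{\omega\in\Omega:\ F(\omega)\sub nG+\epsilon B_X\}\in\Sigma.
$$
For fixed $\epsilon$ these sets increase in~$n$, and $\bigcup_n\Omega_{n,\epsilon}=\Omega$ because each weakly compact $F(\omega)$ lies in some $nG+\epsilon B_X$ by strong generation; hence $\mu(\Omega_{n,\epsilon})\to 1$ by continuity of measure from below. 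Now fix $\delta>0$ and $\epsilon_k\downarrow 0$, pick $n_k$ with $\mu(\Omega_{n_k,\epsilon_k})\geq 1-\delta\,2^{-k}$, and set $A:=\bigcap_k\Omega_{n_k,\epsilon_k}$ and $W:=\bigcap_k(n_kG+\epsilon_k B_X)$. Then $\mu(\Omega\setminus A)\leq\sum_k\delta\,2^{-k}=\delta$; for $\omega\in A$ we get $F(\omega)\sub W$; and $W$ is weakly compact since it is weakly closed and, given any $\epsilon>0$, choosing $k$ with $\epsilon_k<\epsilon$ yields $W\sub n_kG+\epsilon B_X$ with $n_kG$ weakly compact, so Fact~\ref{fact:Grothendieck} applies.

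\emph{The main obstacle} is the innocuous-looking verification that $nG+\epsilon B_X$ is convex and closed, since~\eqref{eqn:meas} is the only measurability available and it is stated \emph{only} for convex closed sets; this is exactly what forces the convexification of~$G$ and rests on the fact that a weakly compact set plus a weakly closed set stays weakly closed. The remaining subtlety is to manufacture a single weakly compact $W$ out of the countable family of approximations---no individual $n_kG+\epsilon_k B_X$ is weakly compact---which the intersection together with Grothendieck's Fact~\ref{fact:Grothendieck} resolves.
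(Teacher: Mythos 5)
Your proof is correct and follows essentially the same route as the paper's: reduce to the case that $X$ is SWCG, use Krein-Smulyan to replace the strongly generating set $G$ by a convex one so that each $nG+\epsilon B_X$ is convex and closed (hence eligible for~\eqref{eqn:meas}), pick $n_k$ so the exceptional sets have measure at most $\delta 2^{-k}$, and intersect, with Fact~\ref{fact:Grothendieck} yielding weak compactness of $W$. The only cosmetic differences are that the paper takes $G$ absolutely convex and uses $\epsilon_k=1/k$.
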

\begin{proof} (ii) follows immediately from (i). To prove~(i) 
we can suppose without loss of generality that $X$ is SWCG, i.e. $wk(X)$ is strongly generated by some 
weakly compact set $G \sub X$. By the Krein-Smulyan theorem we can assume further that $G$ is absolutely convex.  
For each $n,k\in \Nat$, the set $C_{n,k}:=nG + (1/k) B_X$ is convex and closed,
hence~\eqref{eqn:meas} ensures that
$$
	A_{n,k}:=\{\omega \in \Omega: \, F(\omega) \sub C_{n,k}\} \in \Sigma.
$$
For each $k\in \Nat$ we have $\Omega=\bigcup_{n\in \Nat}A_{n,k}$, the union being increasing  
since $G$ is balanced. Fix $\delta>0$. For each $k\in \Nat$ we can choose $n_k\in \Nat$ large enough such that
$\mu(\Omega \setminus A_{n_k,k})\leq \delta/2^{k}$. Define $A:=\bigcap_{k\in \Nat}A_{n_k,k}\in \Sigma$, so
that $\mu(\Omega \setminus A) \leq \delta$. Now, the set $W:=\bigcap_{k\in \Nat} C_{n_k,k}$
is weakly compact (by Fact~\ref{fact:Grothendieck}) and $F(\omega)\sub W$ for every $\omega\in A$.
This finishes the proof.
\end{proof}

The examples of Subsections~\ref{subsection:Talagrand} and~\ref{subsection:BH} below
are built over Banach spaces constructed using the {\em dyadic tree}
$\cT:=\bigcup_{n\in \Nat}\{0,1\}^n$, that is, the set of all (non-empty) finite sequences of $0$'s and $1$'s.
$\cT$ is equipped with the partial order $\succeq$ defined by declaring that 
$\tau\succeq \sigma$ if $\tau$ extends~$\sigma$. 
By a {\em chain} (resp. {\em antichain}) of~$\cT$ we mean a set of pairwise comparable (resp. incomparable)
elements of~$\cT$. We denote by $c_{00}(\cT)$ the linear space of all finitely supported real-valued functions on~$\cT$.
For each $\sigma\in \cT$, let $e_\sigma\in c_{00}(\cT)$ be defined by $e_\sigma(\tau):=0$
for all $\tau\neq \sigma$, and $e_\sigma(\sigma):=1$.

\subsection{The Talagrand space}\label{subsection:Talagrand}

The Banach space $E$ is defined as the completion of $c_{00}(\cT)$ equipped with the norm
$$
	\|x\|_{E}:=\sup_{n\in \Nat}
	\left(
	\sum_{\sigma\in \{0,1\}^n} \left(\sup_{\tau\succeq \sigma}|x(\tau)|\right)^2
	\right)^{1/2}.
$$
This space was introduced by Talagrand in~\cite{tal15} and served to refute
conjectures about the Dunford-Pettis, Banach-Saks and Kadec-Klee properties
in Lebesgue-Bochner spaces (cf. \cite[Section~5.5]{lin-J}). The set $\{e_\sigma:\sigma\in \cT\}$ is 
an unconditional basis of~$E$.
The dual $E^*$ is separable and has the Schur property, see \cite[Corollary~2]{tal15} 
(cf. \cite[Theorem~5.5.3]{lin-J}), hence it is SWCG. 

\begin{theo}\label{theo:Talagrand}
$E^*$ fails property~($\delta\mathcal{S}_\lambda$).
\end{theo}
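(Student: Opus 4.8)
The plan is to exhibit a relatively weakly compact subset of $L^1([0,1],E^*)$ that is not a $\delta\cS$-set. The decisive structural input is the Schur property of $E^*$: weakly convergent sequences in $E^*$ are norm convergent, so together with the Eberlein--\v{S}mulian theorem a subset of $E^*$ is weakly compact if and only if it is norm compact. Hence, to show that $E^*$ fails property~($\delta\cS_\lambda$), it suffices to produce a relatively weakly compact family $K\sub L^1([0,1],E^*)$ and some $\delta\in(0,1)$ for which no \emph{norm} compact set $W\sub E^*$ satisfies $\lambda(f^{-1}(W))\ge 1-\delta$ for every $f\in K$.

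The candidates live over the dyadic tree. Writing $I_\sigma\sub[0,1]$ for the dyadic interval of length $2^{-|\sigma|}$ indexed by $\sigma\in\cT$, and $e_\sigma^*\in E^*$ for the coordinate functionals biorthogonal to the basis $\{e_\sigma\}$, I would work with tree-supported functions whose values at dyadic level $n$ range over $\{e_\sigma^*:\sigma\in\{0,1\}^n\}$. The key combinatorial estimate is a uniform separation at each level: for distinct $\sigma,\tau\in\{0,1\}^n$ one computes $\|e_\sigma-e_\tau\|_E=\sqrt2$ (the defining norm is concentrated at level $n$, where $\sigma$ and $\tau$ contribute one unit each), whence testing $e_\sigma^*-e_\tau^*$ against $(e_\sigma-e_\tau)/\sqrt2$ gives $\|e_\sigma^*-e_\tau^*\|_{E^*}\ge\sqrt2$. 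Since a norm compact $W$ is totally bounded, it contains at most some finite number $N(W)$ of pairwise $\sqrt2$-separated points, and therefore meets each level in at most $N(W)$ of the $e_\sigma^*$. Consequently $\lambda(f_n^{-1}(W))\le N(W)\,2^{-n}\to 0$, which already rules out the $\delta\cS$ property for $\delta=\frac{1}{2}$. This is precisely the obstruction that Proposition~\ref{pro:LR} forbids for sets sitting inside some $S^1(F)$ with weakly compact values: here the level value sets $\{e_\sigma^*:|\sigma|=n\}$ are \emph{not} relatively compact, so no such $F$ can capture $K$.

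The main obstacle is to arrange that $K$ is nonetheless relatively weakly compact. Here the naive choice $f_n:=\sum_{|\sigma|=n}\chi_{I_\sigma}e_\sigma^*$ fails: it is uniformly integrable, with $\|f_n(\cdot)\|_{E^*}\equiv 1$, but it is \emph{not} weakly null. Indeed, along any branch $\sigma_1\prec\sigma_2\prec\cdots$ one has $\|\sum_{k\le N}e_{\sigma_k}\|_E=1$, so $\|\sum_{k\le N}e_{\sigma_k}^*\|_{E^*}=N$; dually, $E^{**}$ carries ``branch functionals'' $z$ with $z(e_{\sigma_k}^*)=1$ for all $k$, and the associated $\phi\in L^\infty_{w^*}(\lambda,E^{**})=(L^1([0,1],E^*))^*$ pairs with $f_n$ to give $\langle\phi,f_n\rangle=1$ for every $n$. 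The remedy is to build into the construction a Rademacher/martingale-difference cancellation across each level so that these branch functionals---and in fact all of $E^{**}$---are annihilated in the limit; verifying weak nullity then amounts to controlling the bidual $E^{**}$, which is exactly the delicate analysis carried out by Talagrand in \cite{tal15} and is precisely what underlies the failure of the Dunford--Pettis property for $L^1([0,1],E^*)$. Granting such a weakly null family $K$ whose level value sets remain uniformly separated (so that the estimate of the previous paragraph still applies), the set $K$ is relatively weakly compact by Theorem~\ref{theo:Diestel} while failing to be a $\delta\cS$-set, and the theorem follows. I expect this last step---producing the weakly null sequence and certifying its weak nullity against the whole bidual---to be by far the hardest part; the Schur reduction and the separation estimate are comparatively routine.
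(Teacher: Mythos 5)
Your proposal is correct and is, in substance, the paper's own proof: both use Talagrand's example from \cite{tal15} --- a sequence of functions whose values at stage $n$ run over the biorthogonal functionals $\{e^*_\sigma:\sigma\in\{0,1\}^n\}$, each on a set of measure $2^{-n}$ --- both cite \cite{tal15} for the fact that this sequence is weakly null in the Lebesgue--Bochner space, and both then use the Schur property of $E^*$ to reduce the failure of the $\delta\cS$ condition to defeating norm compact sets. Two remarks on where you differ. First, the paper makes the weakly null family explicit: it works on $\Omega=\prod_{n\in\Nat}\{0,1\}^n$ with the product measure (whose measure algebra is that of $([0,1],\mathcal{L},\lambda)$, so that Corollary~\ref{cor:SeparableAndSmaller} applies) and sets $f_n(\omega):=e^*_{P_n(\omega)}$, where $P_n$ is the $n$-th coordinate projection; the independence of the coordinates is exactly the ``cancellation across levels'' you invoke, and your diagnosis that the nested dyadic choice $\sum_{|\sigma|=n}\chi_{I_\sigma}e^*_\sigma$ is \emph{not} weakly null (via branch functionals in $E^{**}$) is correct and is precisely why the levels must be made independent. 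So your ``granting such a weakly null family'' is the same outsourcing to \cite{tal15} that the paper performs, though a complete write-up should display the family. Second, your endgame is genuinely different and arguably cleaner: you use the $\sqrt{2}$-separation of each level set $\{e^*_\sigma:|\sigma|=n\}$ together with total boundedness to get the quantitative bound $\lambda(f_n^{-1}(W))\leq N(W)\,2^{-n}\to 0$ for any norm compact $W$, whereas the paper argues by contradiction: if $\mu(f_n^{-1}(W))\geq\delta$ for all $n$, a limsup argument produces $\omega$ and $n_1<n_2<\cdots$ with $e^*_{P_{n_k}(\omega)}\in W$ for all $k$, and this normalized $w^*$-null sequence (unconditionality of the basis) admits no norm convergent subsequence, contradicting norm compactness of $W$; both routes in fact prove the same strengthened statement, namely that for every $\delta>0$ and every weakly compact $W$ some $f_n$ satisfies $\mu(f_n^{-1}(W))<\delta$. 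One slip to correct: Theorem~\ref{theo:Diestel} is not what gives relative weak compactness of your family --- it goes in the opposite direction ($\delta\cS$-sets are relatively weakly compact); the fact you need is simply that a weakly convergent sequence together with its limit is weakly compact.
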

\begin{proof} 
We work with $\Omega:=\prod_{n\in \Nat}\{0,1\}^n$ and the usual probability measure~$\mu$ on~$\Sigma:={\rm Borel}(\Omega)$, 
that is, the product of the probability measures assigning measure $2^{-n}$ to each singleton of~$\{0,1\}^n$. 
Since $(\Omega,\Sigma,\mu)$ is non-atomic and separable, Corollary~\ref{cor:SeparableAndSmaller}
makes clear that it suffices to show that $E^*$ fails property~($\delta\mathcal{S}_\mu$). 

Let $\{e_\sigma^*:\sigma \in \cT\} \sub E^*$
be the set of biorthogonal functionals associated to the basis $\{e_\sigma:\sigma\in \cT\}$.
For each $n\in \Nat$, let $P_n:\Omega \to \{0,1\}^n$ be the $n$th-coordinate projection and
define a simple function $f_n:\Omega \to E^*$ by 
$$
	f_n(\omega):=e^*_{P_n(\omega)}.
$$  
The sequence $(f_n)$ is weakly null in $L^1(\mu,E^*)$, see \cite{tal15}, proof of Theorem~3 (cf. \cite[Theorem~5.5.4]{lin-J}).
We claim that the relatively weakly compact set 
$$
	K:=\{f_n: \, n\in \Nat\} \sub L^1(\mu,E^*)
$$ 
is not a $\delta\cS$-set. Actually, we will prove that {\em for every} $\delta>0$ and every weakly compact set $W \sub E^*$
there is $n\in \Nat$ such that $\mu(f_n^{-1}(W))< \delta$.

By contradiction, suppose that there exist $\delta>0$ and a weakly compact set $W \sub E^*$ 
such that $\mu(f_n^{-1}(W)) \geq \delta$ for all $n\in \Nat$. Since $E^*$ has the Schur property, $W$ 
is norm compact. Note that
$$
	\mu\Big(\bigcap_{n\in \Nat}\bigcup_{m\geq n}f_m^{-1}(W)\Big) \geq \delta.
$$
In particular, there exist $\omega\in \Omega$ and a subsequence $n_1<n_2<\dots$ of~$\Nat$
such that $f_{n_k}(\omega)=e^*_{P_{n_k}(\omega)}\in W$ for all $k\in \Nat$. 
Since $P_{n_k}(\omega)\neq P_{n_{k'}}(\omega)$ whenever $k\neq k'$ and $\{e_\sigma:\sigma\in \cT\}$ is 
a normalized unconditional basis of~$E$, the sequence $(e^*_{P_{n_k}(\omega)})$ is $w^*$-null in~$E^*$.
Bearing in mind that $\|e^*_\sigma\|_{E^*}=1$ for all $\sigma\in \cT$, it follows 
that $(e^*_{P_{n_k}(\omega)})$ cannot have norm convergent subsequences, which
contradicts the norm compactness of~$W$. This finishes the proof.
\end{proof}

\subsection{The Batt-Hiermeyer space}\label{subsection:BH}

Let $X_0:=\{x: \cT\to \mathbb{R}: \, \|x\|_{X_0}<\infty\}$, where
$$
	\|x\|_{X_0}:=\sup
	\left\{\left(
	\sum_{i=1}^n 
	\left(
	\sum_{\sigma\in C_i}|x(\sigma)|
	\right)^2
	\right)^{1/2}
	\right\},
$$
the supremum being taken over all finite sets $\{C_1,\dots,C_n\}$ of pairwise
disjoint finite chains of~$\cT$. The Banach space $(X_0,\|\cdot\|_{X_0})$ was 
introduced in~\cite{bat-hie}. A more general construction
dealing with the so-called ``adequate families'' was considered by Kutzarova and Troyanski~\cite{kut-tro-2} (cf. \cite[Section~3]{arg-mer}). 

The set $\{e_\sigma:\sigma\in \cT\}$ is a boundedly complete 
unconditional basis of~$X_{0}$. In particular, $X_0$ is separable and WSC. 
As we mentioned in the introduction, it is known that $X_0$ cannot be embedded into a SWCG space. We next
give another proof of this fact. To this end, we use Proposition~\ref{pro:LR} and the example of~\cite[Section~3]{bat-hie} 
showing that $X_0$ fails property~($\delta\cS_\lambda$).

\begin{theo}\label{exa:BH}
$X_{0}$ is not a subspace of a SWCG space.
\end{theo}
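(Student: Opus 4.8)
The plan is to combine Proposition~\ref{pro:LR} with the known fact (from~\cite[Section~3]{bat-hie}) that $X_0$ fails property~($\delta\cS_\lambda$), using the contrapositive. Suppose, for contradiction, that $X_0$ were a subspace of a SWCG space. Since $\lambda$ is non-atomic (in particular not purely atomic), Proposition~\ref{pro:LR} applies with $X=X_0$ and $\mu=\lambda$: for any multi-function $F:[0,1]\to wk(X_0)$ satisfying the measurability condition~\eqref{eqn:meas}, every uniformly integrable subset of $S^1(F)$ would be a $\delta\cS$-set. The strategy is to show that the specific relatively weakly compact, non-$\delta\cS$ set witnessing the failure of~($\delta\cS_\lambda$) for $X_0$ actually sits inside such a set $S^1(F)$ and is uniformly integrable, yielding the contradiction.

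The first concrete step is to recall the Batt--Hiermeyer witness explicitly. Their construction in~\cite[Section~3]{bat-hie} produces a relatively weakly compact set $K\sub L^1([0,1],X_0)$ that is \emph{not} a $\delta\cS$-set; being relatively weakly compact, $K$ is automatically uniformly integrable (see~\cite[p.~104, Theorem~4]{die-uhl-J}). The second step is to exhibit a multi-function $F:[0,1]\to wk(X_0)$ with $K\sub S^1(F)$ and with $F$ satisfying~\eqref{eqn:meas}. The natural candidate is to let $F(\omega)$ be the (weakly compact) closed convex hull, or a suitable weakly compact envelope, of $\{f(\omega):f\in K\}$ for each $\omega$; one must check both that each $F(\omega)$ is genuinely weakly compact in $X_0$ and that $\{\omega: F(\omega)\sub C\}\in\mathcal{L}$ for every closed convex $C$. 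Here the boundedly complete unconditional basis structure of $X_0$ and the concrete, coordinatewise description of the Batt--Hiermeyer functions should make the pointwise sets manageable and the measurability transparent, since membership $F(\omega)\sub C$ can be tested on the countably many generators $f(\omega)$, $f\in K$.

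Once $K\sub S^1(F)$ with $F$ measurable and each $F(\omega)$ weakly compact is established, Proposition~\ref{pro:LR}(ii) forces the uniformly integrable set $K$ to be a $\delta\cS$-set, contradicting the Batt--Hiermeyer result. This contradiction shows $X_0$ cannot be a subspace of any SWCG space, completing the proof.

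I expect the main obstacle to be the second step: verifying that the pointwise envelope $F(\omega)$ is weakly compact and that $F$ satisfies the measurability condition~\eqref{eqn:meas}. The delicate point is that $X_0$ is only WSC, not reflexive, so the weak compactness of $\{f(\omega):f\in K\}$ or its closed convex hull is not automatic and will have to be extracted from the specific structure of the Batt--Hiermeyer functions (for instance, showing these pointwise value sets are contained in a fixed weakly compact set coming from the chain-based norm of $X_0$). If a single weakly compact $W_0\sub X_0$ containing all values $f(\omega)$ can be produced directly, one may even bypass the multi-function formalism and take $F\equiv W_0$ constant, which would trivially satisfy~\eqref{eqn:meas}; whether such a uniform $W_0$ exists, however, is precisely where the failure of~($\delta\cS_\lambda$) lives, so the genuinely $\omega$-dependent $F$ is likely unavoidable and is where the argument must do real work.
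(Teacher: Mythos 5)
Your overall strategy is exactly the paper's: assume $X_0$ embeds in a SWCG space, invoke Proposition~\ref{pro:LR}(ii) in contrapositive form, and contradict the Batt--Hiermeyer fact that their witness set is not a $\delta\cS$-set. However, there is a genuine gap, and it sits precisely at the step you yourself flag as the main obstacle: you never establish that the pointwise envelope $F(\omega)$ is weakly compact, nor that $F$ satisfies~\eqref{eqn:meas}. Your ``natural candidate'' --- the closed convex hull of $\{f(\omega):f\in K\}$ --- is not known to be weakly compact for an abstract relatively weakly compact $K\sub L^1(\mu,X_0)$; relative weak compactness of $K$ in the Lebesgue--Bochner space gives no control whatsoever on the weak compactness of the pointwise value sets (this is exactly the difference between relatively weakly compact sets and $\delta\cS$-sets). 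So the proof cannot proceed at the level of generality in which you state it; it must use the concrete form of the Batt--Hiermeyer functions, which your proposal gestures at but does not supply.

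What closes the gap is the following. The Batt--Hiermeyer witness is $K=\{f_n:n\in\Nat\}$ where $f_n(\omega):=e_{\Delta_n(\omega)}$ and $\Delta_n(\omega):=(\omega_1,\dots,\omega_{n-1},1-\omega_n)$ for $\omega$ in the Cantor group $\{0,1\}^{\Nat}$ (measure-algebra isomorphic to $([0,1],\mathcal{L},\lambda)$, so this change of probability space is harmless by Lemma~\ref{lem:MeasureAlgebra}). For each \emph{fixed} $\omega$, the indices $\{\Delta_n(\omega):n\in\Nat\}$ form an infinite antichain of~$\cT$: for $m<n$ the $m$th coordinate of $\Delta_n(\omega)$ is $\omega_m\neq 1-\omega_m$. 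Since any finite chain meets an antichain in at most one point, the definition of $\|\cdot\|_{X_0}$ shows that $(e_{\Delta_n(\omega)})_n$ is isometrically equivalent to the unit vector basis of $\ell^2$, hence weakly null; consequently $F(\omega):=\{f_n(\omega):n\in\Nat\}\cup\{0\}$ is already weakly compact --- no convex hull or Krein--Smulyan step is needed. Measurability~\eqref{eqn:meas} is then immediate and does not require any basis structure: because each $f_n$ is a \emph{simple} function and $K$ is countable, for every closed convex (indeed every weakly closed) $C\sub X_0$ one has $\{\omega:F(\omega)\sub C\}=\Omega\setminus\bigcup_{n\in\Nat}f_n^{-1}(X_0\setminus C)\in\Sigma$. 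With these two verifications inserted, your argument becomes the paper's proof verbatim. Your observation that a constant $F\equiv W_0$ cannot work is correct and well-reasoned (it would make $K\sub L(W_0)$ a $\delta\cS$-set outright), but identifying that the $\omega$-dependent $F$ is necessary is not the same as producing one that works; the antichain-to-$\ell^2$ computation is the irreplaceable content.
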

\begin{proof}
We work with $\Omega:=\{0,1\}^\Nat$ equipped with the standard probability~$\mu$ on $\Sigma:={\rm Borel}(\Omega)$. For each $n\in \Nat$,
we define $\Delta_n: \Omega \to \{0,1\}^n$ by 
$$
	\Delta_n(\omega):=(\omega_1,\dots,\omega_{n-1},1-\omega_n), \quad \omega=(\omega_k)\in \Omega,
$$
and we define a simple function $f_n:\Omega \to X_{0}$ by $f_n(\omega):=e_{\Delta_n(\omega)}$. Observe that for each $\omega\in \Omega$
the set $\{\Delta_n(\omega):n\in \Nat\}$ is an infinite antichain of~$\cT$ and so the sequence $(e_{\Delta_n(\omega)})$ is equivalent to the
usual basis of~$\ell^2$. In particular, the sequence $(f_n(\omega))$ is weakly null in~$X_{0}$ for every $\omega\in \Omega$. Bearing in mind that
$(f_n)$ is uniformly bounded, it is easy to check that $(f_n)$ is weakly null in $L^1(\mu,X_{0})$. 

In \cite[Section~3]{bat-hie} it was proved that $K:=\{f_n:n\in \Nat\}$ is not a $\delta\cS$-set of $L^1(\mu,X_{0})$. On the other hand, the multi-function
$$
	F:\Omega \to wk(X_{0}), \quad
	F(\omega):=\{f_n(\omega):n\in \Nat\}\cup\{0\},
$$  
satisfies condition~\eqref{eqn:meas} of Proposition~\ref{pro:LR}. Indeed, for every weakly closed set $C \sub X_{0}$ we have
$$
	\{\omega \in \Omega: \ F(\omega)\sub C\}=\Omega \setminus \bigcup_{n\in \Nat}f_n^{-1}(X_{0}\setminus C) \in \Sigma,
$$
because each $f_n$ is a simple function. Note that $K$ is a uniformly integrable subset of~$S^1(F)$
which is not a $\delta\cS$-set. From Proposition~\ref{pro:LR}(ii) it follows that $X$ cannot be a subspace of a SWCG space.
\end{proof}

\section{Further results}\label{section:results}

We begin this section with a Grothendieck-type stability result for $\delta\cS$-sets
which involves the sets $L(W)$ defined in~\eqref{eqn:LC}.

\begin{pro}\label{pro:GrothendieckStable}
Let $K \sub L^1(\mu,X)$. The following statements are equivalent:
\begin{enumerate}
\item[(i)] $K$ is a $\delta\mathcal{S}$-set.
\item[(ii)] For every $\epsilon>0$ there is a weakly compact set $W \sub X$ such that
$$
	K \sub L(W)+\epsilon B_{L^1(\mu,X)}.
$$
\item[(iii)] For every $\epsilon>0$ there is a $\delta\mathcal{S}$-set $K_\epsilon \sub L^1(\mu,X)$ such that
$$
	K \sub K_{\epsilon}+\epsilon B_{L^1(\mu,X)}.
$$
\end{enumerate}
\end{pro}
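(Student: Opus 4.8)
The plan is to establish the cycle of implications (i)~$\impli$~(ii)~$\impli$~(iii)~$\impli$~(i). The implication (ii)~$\impli$~(iii) is immediate: for any weakly compact set $W\sub X$ the set $L(W)$ defined in~\eqref{eqn:LC} is itself a $\delta\cS$-set, so one simply takes $K_\epsilon:=L(W)$.

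For (i)~$\impli$~(ii), fix $\epsilon>0$. Since $K$ is uniformly integrable, I would first choose $\delta>0$ so that $\sup_{f\in K}\int_A \|f(\cdot)\|_X\,d\mu\leq \epsilon$ whenever $\mu(A)\leq \delta$. Applying Definition~\ref{defi:deltaS-set} to this $\delta$ yields a weakly compact set $W\sub X$, which may be assumed to contain $0$ (replace $W$ by $W\cup\{0\}$), such that $\mu(f^{-1}(W))\geq 1-\delta$ for every $f\in K$. Given $f\in K$, the set $A_f:=\Omega\setminus f^{-1}(W)$ is measurable with $\mu(A_f)\leq\delta$, and the truncation $g_f:=f\cdot\chi_{f^{-1}(W)}$ is a strongly measurable function taking values in $W$ $\mu$-a.e.\ (here $0\in W$ is used), so $g_f\in L(W)$. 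Since $\|f-g_f\|_{L^1(\mu,X)}=\int_{A_f}\|f(\cdot)\|_X\,d\mu\leq \epsilon$, we conclude $K\sub L(W)+\epsilon B_{L^1(\mu,X)}$.

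For (iii)~$\impli$~(i), I first note that $K$ is uniformly integrable: approximating each $f\in K$ in $L^1$-norm by an element of a uniformly integrable set and absorbing the small $L^1$-error gives the required uniform control of $\int_A\|f(\cdot)\|_X\,d\mu$. To verify the remaining condition I invoke Lemma~\ref{lem:CharacterizationDeltaS}. Fix $\delta>0$ and $\epsilon>0$, and put $\epsilon':=\epsilon\delta/2$. By~(iii) there is a $\delta\cS$-set $K_{\epsilon'}$ with $K\sub K_{\epsilon'}+\epsilon' B_{L^1(\mu,X)}$, and applying the $\delta\cS$-property of $K_{\epsilon'}$ with parameter $\delta/2$ produces a weakly compact $W\sub X$ with $\mu(g^{-1}(W))\geq 1-\delta/2$ for all $g\in K_{\epsilon'}$. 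Given $f\in K$, write $f=g+h$ with $g\in K_{\epsilon'}$ and $\|h\|_{L^1(\mu,X)}\leq \epsilon'$; Markov's inequality gives $\mu(\{\omega:\|h(\omega)\|_X>\epsilon\})\leq \epsilon'/\epsilon=\delta/2$. On the intersection of $\{\omega:g(\omega)\in W\}$ and $\{\omega:\|h(\omega)\|_X\leq\epsilon\}$, whose complement has measure at most $\delta$, one has $f(\omega)=g(\omega)+h(\omega)\in W+\epsilon B_X$. Hence $\mu(f^{-1}(W+\epsilon B_X))\geq 1-\delta$ for all $f\in K$, and Lemma~\ref{lem:CharacterizationDeltaS} shows that $K$ is a $\delta\cS$-set.

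I expect the main obstacle to be the direction (i)~$\impli$~(ii), where a purely measure-theoretic approximation (membership in a weakly compact set up to small measure) must be upgraded to an $L^1$-norm approximation by functions \emph{genuinely} valued in a weakly compact set. The point that makes this work is uniform integrability, which lets the integral of $\|f(\cdot)\|_X$ over the small ``bad'' set $A_f$ be controlled uniformly in $f$ even though $A_f$ depends on $f$. In (iii)~$\impli$~(i) the corresponding subtlety is that the $L^1$-approximant need not land in a single weakly compact set, so one cannot directly produce $W$ with $\mu(f^{-1}(W))\geq 1-\delta$; the fattening by $\epsilon B_X$ in Lemma~\ref{lem:CharacterizationDeltaS} is precisely what absorbs the perturbation $h$, while Markov's inequality converts the $L^1$-smallness of $h$ into smallness in measure.
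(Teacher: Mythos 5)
Your proof is correct, and it follows the same cycle (i)$\impli$(ii)$\impli$(iii)$\impli$(i) as the paper, but the two nontrivial implications are handled differently. For (i)$\impli$(ii) the paper simply cites \cite[Fact~2.5]{rod13}; your truncation argument (cut $f$ off outside $f^{-1}(W)$, adjoin $0$ to $W$, and control the error on the bad set $A_f$ by uniform integrability) is exactly the kind of proof being referred to, so here you have only made the step self-contained. The genuine divergence is in (iii)$\impli$(i): the paper first applies the already-established implication (i)$\impli$(ii) to each $\delta\cS$-set $K_\epsilon$, replacing it by a set of the form $L(W_n)$ as in \eqref{eqn:LC}, and then builds the witness $W:=\bigcap_{n}\big(W_n+2^{-n}B_X\big)$ by hand, using Fact~\ref{fact:Grothendieck} for its weak compactness and Chebyshev's inequality on the perturbations $h_n$ with $\|h_n\|_{L^1(\mu,X)}\leq \delta/4^n$. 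You instead keep the sets $K_\epsilon$ abstract, use Markov's inequality to show that each $f\in K$ lands in $W+\epsilon B_X$ outside a set of measure at most $\delta$, and let Lemma~\ref{lem:CharacterizationDeltaS} absorb the $\epsilon B_X$-fattening. The two routes rest on the same ingredients (Fact~\ref{fact:Grothendieck} plus Markov/Chebyshev), since Lemma~\ref{lem:CharacterizationDeltaS} is itself proved by the very intersection argument the paper inlines; but your organization has the structural advantage that (iii)$\impli$(i) does not depend on (i)$\impli$(ii), so it stands on its own, and it avoids duplicating the countable-intersection machinery, whereas the paper's version exhibits the weakly compact set $W$ explicitly within the proof of the proposition.
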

\begin{proof}
(i)$\impli$(ii): See \cite[Fact~2.5]{rod13} for an easy proof. 

(ii)$\impli$(iii): Obvious.

(iii)$\impli$(i): Clearly, $K$ is uniformly integrable. Fix $\delta>0$. Bearing in mind (i)$\impli$(ii), we can choose a sequence $(W_n)$ of weakly
compact subsets of~$X$ such that
\begin{equation}\label{eqn:Grothendieck}
	K \sub L(W_n) + \frac{\delta}{4^n}B_{L^1(\mu,X)} \quad
	\mbox{for every }n\in \Nat.
\end{equation}
By Fact~\ref{fact:Grothendieck}, the set
$$
	W:=\bigcap_{n\in \Nat} \Big(W_n+\frac{1}{2^n}B_X\Big)
$$
is weakly compact. {\em We claim that $\mu(f^{-1}(W))\geq 1-\delta$ for every $f\in K$.} Indeed,
for each $n\in \Nat$ we use \eqref{eqn:Grothendieck} to find $h_n\in L^1(\mu,X)$ such that $f-h_n\in L(W_n)$ and 
$\|h_n\|_{L^1(\mu,X)}\leq \delta/4^{n}$. Chebyshev's inequality applied to~$h_n$ yields
$$
	\mu\Big(\Omega\setminus f^{-1}\Big(W_n+\frac{1}{2^n}B_X\Big)\Big) \leq 
	\mu\Big(\Big\{\omega\in \Omega:\, \|h_n(\omega)\|_X>\frac{1}{2^n}\Big\}\Big)\leq \frac{\delta}{2^n}.
$$ 
Therefore
$$
	\mu\big(\Omega\setminus f^{-1}(W)\big)\leq \sum_{n\in \Nat} \mu\Big(\Omega\setminus f^{-1}\Big(W_n+\frac{1}{2^n}B_X\Big)\Big) \leq \delta.
$$
This proves the claim. It follows that $K$ is a $\delta\mathcal{S}$-set.
\end{proof}

We say that a set $H \sub L^1(\mu,X)$ is {\em uniformly bounded} if there is a constant $C>0$
such that for every $f\in H$ we have $\|f(\cdot)\|_X \leq C$ $\mu$-a.e. Recall that
a subset of a Banach space is said to be {\em conditionally weakly compact} if every sequence in it
admits a weakly Cauchy subsequence.

\begin{theo}\label{theo:UniformlyBounded}
$X$ has property~($\delta\mathcal{S}_\mu$) if and only if every uniformly bounded 
relatively weakly compact subset of~$L^1(\mu,X)$ is a $\delta\mathcal{S}$-set.
\end{theo}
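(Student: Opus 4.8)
The forward implication requires no work: property~($\delta\cS_\mu$) asserts that \emph{every} relatively weakly compact subset of $L^1(\mu,X)$ is a $\delta\cS$-set, so in particular every uniformly bounded one is. All the content is in the converse, which I would prove by radially truncating an arbitrary relatively weakly compact $K\sub L^1(\mu,X)$ (automatically uniformly integrable) down to a uniformly bounded set to which the hypothesis applies. For $C>0$ let $T_C\colon L^1(\mu,X)\to L^1(\mu,X)$ send $f$ to the function equal to $f(\omega)$ when $\|f(\omega)\|_X\le C$ and to $Cf(\omega)/\|f(\omega)\|_X$ otherwise; equivalently $T_Cf=m_f\cdot f$ with $m_f(\omega):=\min\{1,C/\|f(\omega)\|_X\}$. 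Then $\|T_Cf(\cdot)\|_X\le C$, so $T_C(K)$ is uniformly bounded, and $T_Cf$ coincides with $f$ on $\{\|f(\cdot)\|_X\le C\}$.

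The gluing step is then elementary. Fix $\delta>0$, put $M:=\sup_{f\in K}\|f\|_{L^1(\mu,X)}<\infty$, and choose $C>2M/\delta$, so that Chebyshev's inequality gives $\mu(\{\|f(\cdot)\|_X>C\})\le M/C<\delta/2$ for all $f\in K$. Granting that $T_C(K)$ is relatively weakly compact, the hypothesis applied to the uniformly bounded set $T_C(K)$ yields a weakly compact $W\sub X$ with $\mu((T_Cf)^{-1}(W))\ge 1-\delta/2$ for every $f\in K$. Since $T_Cf=f$ on $\{\|f(\cdot)\|_X\le C\}$, for each $f\in K$ I get $\mu(f^{-1}(W))\ge\mu(\{T_Cf\in W\}\cap\{\|f(\cdot)\|_X\le C\})\ge(1-\delta/2)-\delta/2=1-\delta$. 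As $\delta$ was arbitrary, $K$ is a $\delta\cS$-set; one could alternatively feed this into Proposition~\ref{pro:GrothendieckStable}, but the direct estimate is shorter. Everything therefore reduces to a single lemma.

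\emph{The crux, and the main obstacle: if $K\sub L^1(\mu,X)$ is relatively weakly compact, then so is $T_C(K)$.} The difficulty is that $T_C$ is genuinely nonlinear, so the convex-block characterizations of relative weak compactness of \"{U}lger~\cite{ulg} and Diestel, Ruess and Schachermayer~\cite{die-alt2} cannot be transported: writing $T_Cf_n=m_nf_n$ with $m_n:=m_{f_n}$, a convex block $\sum_n c_n f_n$ of a sequence $(f_n)\sub K$ is \emph{not} carried by $T_C$ to $\sum_n c_n T_Cf_n$, and the correlated sum $\sum_n c_n m_nf_n$ cannot be recovered from convex blocks of $(f_n)$ and of $(m_n)$ separately. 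Already a scalar clipping shows $T_C$ is not weakly sequentially continuous, so one cannot hope to pass weak limits through $T_C$.

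To overcome this I would argue at the level of Young measures, the natural setting for the Diestel--Ruess--Schachermayer theory; by the separable reductions of Lemma~\ref{lem:MeasureAlgebra} and Corollary~\ref{cor:SeparableAndSmaller} one may first assume $X$ and $\mu$ separable so that the relevant topologies are metrizable on bounded sets. Given a sequence $(f_n)$ in $K$, pass to a subsequence generating a Young measure $\nu=(\nu_\omega)$ on the dual ball $(B_{X^{**}},w^*)$; relative weak compactness of $K$ is equivalent, uniform integrability being already in hand, to the barycenters $\omega\mapsto\int x\,d\nu_\omega(x)$ lying in $X$ for $\mu$-a.e.\ $\omega$. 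The plan is to identify the weak cluster points of the uniformly bounded sequence $(T_Cf_n)$ with the barycenters of the pushforward of $\nu$ under the radial retraction of $X^{**}$, and to show that these barycenters again take values in $X$. The hardest point is exactly this transfer: because the radial retraction is norm-continuous but not weak$^*$-continuous, one cannot merely invoke continuity of the test integrands, and must instead exploit that it maps each weakly compact $W\sub X$ into the weakly compact set $\overline{\mathrm{conv}}(\{0\}\cup W)$ (Krein--Smulyan) together with the semicontinuity properties of normal integrands against Young measures to control the limit. Carrying this through yields relative weak compactness of $T_C(K)$ and completes the proof.
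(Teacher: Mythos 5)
Your reduction (forward direction trivial, Chebyshev gluing of the truncated set back to $K$) is fine, but the single lemma you reduce everything to --- \emph{if $K\sub L^1(\mu,X)$ is relatively weakly compact, then so is $T_C(K)$} --- is false for a general Banach space $X$, so no Young-measure argument can establish it. Concretely, take $X=c_0$, $x_n:=\sum_{k=1}^n e_k$ (weakly Cauchy, not weakly convergent, $\|x_n\|_X=1$), $y:=2e_1$, and $f_n:=y+r_nx_n\in L^1([0,1],X)$ with $(r_n)$ the Rademacher functions. Since $(r_nx_n)$ is weakly null by \cite[Proposition~4.1]{bat-hie}, we have $f_n\to y$ weakly, so $K:=\{f_n:n\in\Nat\}$ is relatively weakly compact. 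Now truncate radially at $C=1$: since $\|y+x_n\|_X=3$ and $\|y-x_n\|_X=1$, a direct computation gives $T_1f_n=\tfrac{1}{3}(2y-x_n)+r_n\,\tfrac{1}{3}(2x_n-y)$. The second summand is again of Rademacher type over a weakly Cauchy sequence, hence weakly null, together with all its subsequences. So if some subsequence $(T_1f_{n_k})$ converged weakly, the constant functions $\tfrac{1}{3}(2y-x_{n_k})$ would converge weakly in $L^1([0,1],X)$; the constant functions form a weakly closed subspace isometric to $X$, so this would force $(x_{n_k})$, and hence the weakly Cauchy sequence $(x_n)$, to converge weakly in $c_0$ --- a contradiction. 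Thus $T_1(K)$ is not relatively weakly compact. In your own Young-measure language this is exactly the failure you feared: the limit Young measure of $(f_n)$ is $\tfrac12\delta_{y+\xi}+\tfrac12\delta_{y-\xi}$ with $\xi=(1,1,1,\dots)\in\ell^\infty\setminus c_0$, whose barycenter $y$ lies in $X$, but the barycenter of its pushforward under the radial retraction is $\tfrac{1}{3}(2y-\xi)\notin X$. The obstruction is not technical; the statement is simply wrong.

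The structural flaw is that your scheme uses the hypothesis of the theorem only once, at the very end, applied to $T_C(K)$; the truncation lemma is asked to hold for arbitrary $X$, and it doesn't. The paper instead uses the hypothesis \emph{before} truncating: running the very Rademacher construction above inside the hypothesis (the set $\{r_nx_n\}$ is uniformly bounded and relatively weakly compact, hence a $\delta\cS$-set, which traps the weakly Cauchy sequence $(x_n)$ in a weakly compact set) shows that $X$ must be weakly sequentially complete --- in particular $c_0$-type examples are excluded. Then the paper truncates to $0$, i.e. $f\mapsto f\chi_{\{\|f(\cdot)\|_X\le C\}}$, and observes that this preserves \emph{conditional} weak compactness, by Bourgain's result that $\{f\chi_A:f\in K,\,A\in\Sigma\}$ is conditionally weakly compact whenever $K$ is; finally, Talagrand's theorem that $L^1(\mu,X)$ is weakly sequentially complete whenever $X$ is upgrades the truncated sets from conditionally to relatively weakly compact, so the hypothesis applies to them, and Proposition~\ref{pro:GrothendieckStable} (iii)$\impli$(i) recovers $K$, much as in your gluing step. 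If you want to keep your outline, you must prove your truncation lemma only under the standing hypothesis (equivalently, for weakly sequentially complete $X$, where conditional and relative weak compactness in $L^1(\mu,X)$ coincide); as stated, for arbitrary relatively weakly compact sets, it fails.
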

\begin{proof} Suppose that $\mu$ is not purely atomic (otherwise any Banach space has property~($\delta\mathcal{S}_\mu$))
and that every uniformly bounded 
relatively weakly compact subset of~$L^1(\mu,X)$ is a $\delta\mathcal{S}$-set. We begin by proving the following: 

{\em Claim. $X$ is WSC.} 
The argument is the same that property~($\delta\mathcal{S}_\lambda$) implies weak sequential completeness 
(cf. \cite[Section~4]{bat-hie}), but we include it for the reader's convenience. By arguing as in the proof 
of Corollary~\ref{cor:SeparableAndSmaller}(ii), we can 
assume without loss of generality that $\mu=\lambda$. Let $(r_n)$
be the sequence of Rademacher functions on~$[0,1]$.
Given any weakly Cauchy sequence $(x_n)$ in~$X$, the sequence $(r_n x_n)$ is weakly null in~$L^1([0,1],X)$
(see \cite[Proposition~4.1]{bat-hie}). The set $\{r_n x_n:n\in\Nat\}$ is uniformly bounded
and relatively weakly compact, hence it is a $\delta\mathcal{S}$-set. Let $W \sub X$ be a weakly compact set
such that $\lambda((r_n x_n)^{-1}(W))>0$ for all $n\in \Nat$. 
Then the (weakly Cauchy) sequence $(x_n)$
is contained in the weakly compact set $W\cup(-W)$ and so it is weakly convergent. This proves that $X$ is WSC, as claimed.

Let $K \sub L^1(\mu,X)$ be a relatively weakly compact set. In particular, $K$ is conditionally weakly compact
and so the same holds for $\{f\chi_A:f\in K,A\in \Sigma\}$, see~\cite[Proposition~10]{bou-JJ}
(cf. \cite[Corollary~4.2]{bat-hie}).
A standard argument (cf. \cite[Theorem~2.18]{ghe}) now ensures
that for every $\epsilon>0$ there is a {\em uniformly bounded} conditionally weakly compact set $K_\epsilon \sub L^1(\mu,X)$
such that 
$$
	K \sub K_\epsilon+ \epsilon B_{L^1(\mu,X)}.
$$ 
On the other hand, since $X$ is WSC, the same holds for $L^1(\mu,X)$, thanks
to a deep result of Talagrand,~\cite[Theorem~11]{tal11} (cf. \cite[Theorem~5.3.14]{lin-J}). Hence the $K_\epsilon$'s above are relatively weakly compact and 
so they are $\delta\mathcal{S}$-sets. An appeal to Proposition~\ref{pro:GrothendieckStable}
ensures that $K$ is a $\delta\mathcal{S}$-set.
\end{proof}

The following result provides new examples of Banach spaces having property~($\delta\mathcal{S}_\mu$).
For instance, it shows that the space $\ell^r(\ell^1)$ has property~($\delta\mathcal{S}_\mu$) for any $1<r<\infty$. 
This space is not a subspace of a SWCG space, see \cite[Corollary~2.29]{kam-mer2} (cf. \cite[Corollary~2.9]{laj-rod-2}).

Recall that the $\ell^r$-sum ($1\leq r <\infty$) of a sequence of Banach spaces~$(X_n)$ is the Banach space
$\ell^r(X_n)$ consisting of all sequences $(x_n)\in \prod_{n\in \Nat}X_n$ such that
$$
	\big\|(x_n)\big\|_{\ell^r(X_n)}=\Big(\sum_{n\in \Nat}\|x_n\|_{X_n}^r\Big)^{1/r}<\infty.
$$

\begin{theo}\label{theo:StabilitySums}
Let $(X_n)$ be a sequence of Banach spaces having property~($\delta\mathcal{S}_\mu$).
Then $\ell^r(X_n)$ has property~($\delta\mathcal{S}_\mu$) for every $1\leq r <\infty$.
\end{theo}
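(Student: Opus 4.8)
The plan is to reduce to uniformly bounded sets, read off $\delta\cS$-data coordinate by coordinate, and then reassemble a single weakly compact set in $Z:=\ell^r(X_n)$; the reassembly is where the exponent $r$ enters. First, by Theorem~\ref{theo:UniformlyBounded} it suffices to prove that every \emph{uniformly bounded} relatively weakly compact set $K\sub L^1(\mu,Z)$ is a $\delta\cS$-set, so I fix such a $K$ and a constant $C>0$ with $\|f(\cdot)\|_Z\le C$ $\mu$-a.e. for all $f\in K$. Writing $f(\omega)=(f_n(\omega))_n$ with $f_n(\omega)\in X_n$, the inequality $\|f_n(\omega)\|_{X_n}\le\|f(\omega)\|_Z$ shows that the coordinate projection $\pi_n\colon Z\to X_n$ induces a norm-one operator $L^1(\mu,Z)\to L^1(\mu,X_n)$, $f\mapsto f_n$. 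Since bounded operators are weak-weak continuous, $K_n:=\{f_n:f\in K\}$ is relatively weakly compact in $L^1(\mu,X_n)$, hence a $\delta\cS$-set because $X_n$ has property~($\delta\cS_\mu$). Thus, for a fixed $\delta>0$, each $n$ yields a weakly compact $W_n\sub X_n$ with $0\in W_n$ and $\mu(\{f_n\notin W_n\})\le\delta/2^{n+1}$ for every $f\in K$.

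Second, I will need the elementary description of weak compactness in an $\ell^r$-sum: a bounded, weakly closed set $W\sub Z$ all of whose coordinate projections $\pi_n(W)$ are relatively weakly compact is itself weakly compact \emph{when} $r>1$; for $r=1$ one must add that $W$ has equi-small tails, $\lim_N\sup_{z\in W}\sum_{n>N}\|z_n\|_{X_n}=0$. The sufficiency in both cases I would get from one diagonal extraction: given a sequence in $W$, pass coordinatewise to weak limits $y_n\in X_n$; weak lower semicontinuity of the norm places $y=(y_n)$ in $Z$, and the H\"older estimate $\sum_{n>N}|\langle\phi_n,z_n\rangle|\le\|(\phi_n)_{n>N}\|_{r'}\,\|z\|_r$ on $Z^*=\ell^{r'}(X_n^*)$ shows that the pairing with any $\phi\in Z^*$ converges, its tail being uniformly small because $\|(\phi_n)_{n>N}\|_{r'}\to0$ when $r>1$ (resp.\ by the equi-small tails when $r=1$). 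Eberlein--\v{S}mulian then delivers weak compactness.

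Third, I carry out the two cases. For $1<r<\infty$ I take $W:=\{z\in Z:\|z\|_Z\le C,\ z_n\in W_n\ \forall n\}$, which is bounded, weakly closed and satisfies $\pi_n(W)\sub W_n$, hence is weakly compact by the previous step; since $\|f(\cdot)\|_Z\le C$ a.e., $f(\omega)\notin W$ can happen only where some $f_n(\omega)\notin W_n$, so $\mu(f^{-1}(Z\setminus W))\le\sum_n\delta/2^{n+1}\le\delta$ and $K$ is a $\delta\cS$-set. For $r=1$ the ball is irrelevant but the tails must be tamed. Here I invoke the isometry $L^1(\mu,\ell^1(X_n))\cong\ell^1(L^1(\mu,X_n))$ (Tonelli), $f\leftrightarrow(f_n)_n$, under which $K$ becomes a relatively weakly compact subset of an $\ell^1$-sum and therefore has equi-small tails, i.e.\ $t_N:=\sup_{f\in K}\sum_{n>N}\|f_n\|_{L^1(\mu,X_n)}\to0$. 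Choosing $N_1<N_2<\cdots$ with $t_{N_k}\le\delta^2 4^{-k}/2$ and putting $\beta_k:=\delta 2^{-k}$, I set $W:=\{z:z_n\in W_n\ \forall n,\ \sum_{n>N_k}\|z_n\|_{X_n}\le\beta_k\ \forall k\}$; the tail constraints make $W$ bounded with equi-small tails (the non-increasing map $N\mapsto\sup_{z\in W}\sum_{n>N}\|z_n\|$ vanishes along $N_k$), so $W$ is weakly compact, while Chebyshev's inequality gives $\mu(\{\sum_{n>N_k}\|f_n(\cdot)\|>\beta_k\})\le t_{N_k}/\beta_k\le\delta 2^{-k}/2$, whence $\mu(f^{-1}(Z\setminus W))\le\delta/2+\delta/2=\delta$.

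The step I expect to be the main obstacle is precisely the $r=1$ tail control, namely extracting equi-small tails from relative weak compactness in the $\ell^1$-sum. In contrast to $r>1$, which becomes automatic once each coordinate is handled, this uses the genuine characterization of relatively weakly compact subsets of $\ell^1$-sums; its necessity half I would prove (or cite) via a gliding-hump argument that manufactures, from a set lacking equi-small tails, an $\ell^1$-like block sequence which cannot converge weakly, contradicting weak compactness.
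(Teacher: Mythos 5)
Your proof is correct, and its skeleton is the paper's: coordinate projections turn $K$ into relatively weakly compact (hence $\delta\cS$-) subsets of $L^1(\mu,X_n)$, producing weakly compact sets $W_n\sub X_n$ with small exceptional measure, and the cases $r=1$ and $1<r<\infty$ are treated separately. For $1<r<\infty$ your argument is the paper's (reduction to uniformly bounded $K$ via Theorem~\ref{theo:UniformlyBounded}, then $W:=CB_Z\cap\bigcap_n\pi_n^{-1}(W_n)$); you additionally supply the diagonal-extraction-plus-H\"{o}lder proof that this set is weakly compact, a point the paper dispatches with the remark ``here we use that $1<r<\infty$''. The genuine divergence is at $r=1$. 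The paper, after transferring $K$ to $\ell^1\big(L^1(\mu,X_n)\big)$ and invoking equi-small tails of relatively weakly compact sets there (\cite[Lemma~7.2(ii)]{kac-alt}), truncates by the operator $P$ that keeps the first $N$ coordinates, so that $K\sub P(K)+\epsilon B_{L^1(\mu,X)}$ with $P(K)$ a $\delta\cS$-set witnessed by a weakly compact set supported on finitely many coordinates, and then concludes via the Grothendieck-type stability result, Proposition~\ref{pro:GrothendieckStable}\,(iii)$\impli$(i). You instead spend the equi-small-tails information quantitatively, building a single weakly compact set $W\sub\ell^1(X_n)$ carrying nested tail constraints $\sum_{n>N_k}\|z_n\|_{X_n}\leq\delta 2^{-k}$ and finishing with Chebyshev; in effect you inline the content of Proposition~\ref{pro:GrothendieckStable} (whose own proof is an intersection-plus-Chebyshev argument) into the construction. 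What the paper's route buys is brevity and reuse of an already-established stability principle; what yours buys is a self-contained construction of the witnessing weakly compact set, with no appeal to Proposition~\ref{pro:GrothendieckStable} (and, as a byproduct, explicit proofs of the weak-compactness criteria in $\ell^r$-sums). Both routes rest on exactly the same nontrivial external input --- relative weak compactness in an $\ell^1$-sum forces equi-small tails --- which you correctly single out as the main obstacle and propose to cite or prove by a gliding-hump argument, just as the paper cites it.
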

\begin{proof}
Write $X:=\ell^r(X_n)$ and, for each $n\in \Nat$, let $\pi_n:X \to X_n$ be the $n$th-coordinate projection
and $\tilde{\pi}_n:L^1(\mu,X)\to L^1(\mu,X_n)$ the operator defined by $\tilde{\pi}_n(f):=\pi_n\circ f$
for all $f\in L^1(\mu,X)$. 

Fix an arbitrary relatively weakly compact set $K \sub L^1(\mu,X)$. For
each $n\in \Nat$, the set $\tilde{\pi}_n(K)=\{\pi_n\circ f:f\in K\}$ is relatively
weakly compact (hence a $\delta\mathcal{S}$-set) in~$L^1(\mu,X_n)$,
and so for every $\delta>0$ there is a weakly compact set $W^\delta_n\sub X_n$ such that
\begin{equation}\label{eqn:Projection}
	\mu\big((\pi_n\circ f)^{-1}(W^\delta_n)\big) \geq 1-\frac{\delta}{2^n}
	\quad
	\mbox{for every }f\in K.
\end{equation}
We now divide the proof that $K$ is a $\delta\cS$-set into two cases.

{\em Case $r=1$.} It is easy to prove that the map
$$
	\Phi: L^1(\mu,X) \to \ell^1\big(L^1(\mu,X_n)\big),
	\quad
	\Phi(f):=(\pi_n\circ f),
$$
is an isometric isomorphism.  We will show that $K$ is a $\delta\mathcal{S}$-set by checking
condition~(iii) in Proposition~\ref{pro:GrothendieckStable}. Fix $\epsilon>0$. Since $\Phi(K)$
is relatively weakly compact, there is $N\in \Nat$ such that
\begin{equation}\label{eqn:Truncation}
	\sup_{f\in K}\, \sum_{n>N}\|\pi_n\circ f\|_{L^1(\mu,X_n)}\leq \epsilon
\end{equation}
(see e.g. \cite[Lemma~7.2(ii)]{kac-alt}). For each $f\in L^1(\mu,X)$, we define $P(f)\in L^1(\mu,X)$ by declaring
$$
	\pi_n\circ P(f):=\begin{cases}
	\pi_n\circ f & \text{if $n\leq N$}, \\
	0 & \text{if $n>N$}.
	\end{cases}
$$
Since $P:L^1(\mu,X)\to L^1(\mu,X)$ is an operator, 
$P(K)$ is relatively weakly compact in~$L^1(\mu,X)$. Observe also that \eqref{eqn:Truncation} yields 
$$
	K \sub P(K)+\epsilon B_{L^1(\mu,X)}. 
$$
We claim that $P(K)$ is a $\delta\mathcal{S}$-set. Indeed, fix $\delta>0$ and 
consider the weakly compact subset of~$X$ defined by
$$
	W:=\bigcap_{n=1}^N\pi_n^{-1}(W_n^\delta)\cap \bigcap_{n>N}\pi_n^{-1}(\{0\}).
$$
Then for each $f\in K$ we have
$$
	\mu\big(\Omega \setminus P(f)^{-1}(W)\big)\leq 
	\sum_{n=1}^N\mu\big(\Omega \setminus (\pi_n\circ f)^{-1}(W^\delta_n)\big) 
	\stackrel{\eqref{eqn:Projection}}{\leq} \delta.
$$ 
This shows that $P(K)$ is a $\delta\mathcal{S}$-set. The proof of the case $r=1$ is finished.

{\em Case $1<r<\infty$.} By Theorem~\ref{theo:UniformlyBounded}, we can assume that $K$ is uniformly bounded.
Let $C>0$ be a constant such that for every $f\in K$ we have $\|f(\cdot)\|_X \leq C$ $\mu$-a.e. 
Fix $\delta>0$. Then the set 
$$
	W:=\bigcap_{n\in \Nat}\pi_{n}^{-1}(W^\delta_n) \cap C B_X
$$
is weakly compact in~$X$ (here we use that $1<r<\infty$) and satisfies
$$
	\mu\big(\Omega \setminus f^{-1}(W)\big)
		\leq \sum_{n\in \Nat}\mu\big(\Omega\setminus (\pi_n\circ f)^{-1}(W^\delta_n)\big)\stackrel{\eqref{eqn:Projection}}{\leq} \delta
	\quad\mbox{for every }f\in K.
$$
This shows that $K$ is a $\delta\mathcal{S}$-set and finishes the proof of the theorem. 
\end{proof}

\begin{rem}\label{rem:WCGrange}
In general, it is not true that for every relatively weakly compact set $K \sub L^1(\mu,X)$ there
a weakly compactly generated (WCG) subspace $Y \sub X$ such that $\mu(f^{-1}(Y))=1$
for every $f\in K$, see \cite[Remark on p.~434]{tal1} and~\cite{rod14}. However, this is the case if $K$ is a $\delta\cS$-set. Indeed,
if $(W_n)$ is a sequence of weakly compact subsets of~$X$ such that 
$\mu(f^{-1}(W_n))\geq 1-1/n$ for every $f\in K$ and every $n\in \Nat$, then
$$
	Y:=\overline{{\rm span}}\Big(\bigcup_{n\in \Nat}W_n\Big)
$$
is WCG and $\mu(f^{-1}(Y))=1$ for every $f\in K$.
\end{rem}

We finish the paper by discussing the concept of~$\delta\cS$-set in the Lebesgue-Bochner
space $L^p(\mu,X)$ for $1<p<\infty$. Recall
that $L^p(\mu,X)$ is the Banach space of all (equivalence classes of) strongly measurable functions $f:\Omega\to X$ such that 
$$
	\|f\|_{L^p(\mu,X)}=\left(\int_\Omega \|f(\cdot)\|^p_X \, d\mu\right)^{1/p} < \infty.
$$
We denote by 
$$
	j_p:L^p(\mu,X)\to L^1(\mu,X)
$$ 
the inclusion operator. The following result collects two well-known properties of~$j_p$. 

\begin{fact}\label{fact:Moving}
Let $H \sub L^p(\mu,X)$ be a bounded set. Then:
\begin{enumerate}
\item[(i)] $j_p(H)$ is uniformly integrable.
\item[(ii)] $H$ is relatively weakly compact if and only if $j_p(H)$ is relatively weakly compact.
\end{enumerate}
\end{fact}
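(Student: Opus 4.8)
The plan is to treat the two items separately, deriving (i) from H\"older's inequality and splitting (ii) into an easy ``only if'' part and a substantial ``if'' part. For (i), let $q$ be the conjugate exponent of $p$, so $1/p+1/q=1$ and $q<\infty$ because $p>1$. First I would note that $j_p(H)$ is bounded in $L^1(\mu,X)$: since $\mu$ is a probability, H\"older's inequality gives $\|j_p(f)\|_{L^1(\mu,X)}\le \|f\|_{L^p(\mu,X)}$ for every $f$. For uniform integrability, for $A\in\Sigma$ and $f\in H$ I would estimate
\[
  \int_A \|f(\cdot)\|_X\,d\mu
  =\int_\Omega \chi_A\,\|f(\cdot)\|_X\,d\mu
  \le \mu(A)^{1/q}\,\|f\|_{L^p(\mu,X)} ,
\]
again by H\"older, so that $\sup_{f\in H}\int_A\|f(\cdot)\|_X\,d\mu\le \mu(A)^{1/q}\sup_{f\in H}\|f\|_{L^p(\mu,X)}$. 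Since $q<\infty$, choosing $\delta>0$ with $\delta^{1/q}\sup_{f\in H}\|f\|_{L^p(\mu,X)}\le\epsilon$ settles uniform integrability. This is the only place where the hypothesis $p>1$ is used.

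For the ``only if'' part of (ii), the map $j_p\colon L^p(\mu,X)\to L^1(\mu,X)$ is an operator by the norm estimate above, hence weak-to-weak continuous; therefore it sends the weakly compact set $\overline{H}^{w}$ to a weakly compact subset of $L^1(\mu,X)$ that contains $j_p(H)$, so $j_p(H)$ is relatively weakly compact.

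The core of the Fact is the ``if'' part of (ii). Here I would argue through the intrinsic description of relative weak compactness in Lebesgue--Bochner spaces due to \"Ulger and Diestel--Ruess--Schachermayer (\cite{ulg,die-alt2}): a bounded, uniformly integrable subset of $L^1(\mu,X)$ is relatively weakly compact if and only if every sequence $(f_n)$ in it admits convex block combinations $g_n\in\mathrm{conv}\{f_k:k\ge n\}$ converging $\mu$-a.e. By Eberlein--Smulian it suffices to show that every sequence $(f_n)$ in $H$ has a weakly convergent subsequence in $L^p(\mu,X)$. Since $j_p(H)$ is relatively weakly compact in $L^1(\mu,X)$ and uniformly integrable by (i), this criterion furnishes convex block combinations $g_n\in\mathrm{conv}\{f_k:k\ge n\}$ with $g_n\to g$ $\mu$-a.e.\ for some $g$; these $g_n$ are simultaneously convex combinations in $L^p(\mu,X)$, and because $H$ is $L^p$-bounded, Fatou's lemma places the a.e.\ limit $g$ in $L^p(\mu,X)$ with $\|g\|_{L^p(\mu,X)}\le\sup_{f\in H}\|f\|_{L^p(\mu,X)}$. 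The same convex-block-a.e.\ condition, now read together with $L^p$-boundedness in place of uniform integrability, is exactly the analogue of the \"Ulger--DRS criterion for $L^p(\mu,X)$ with $1<p<\infty$, and it yields that $H$ is relatively weakly compact in $L^p(\mu,X)$.

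The hard point --- and the reason a direct duality argument does not work --- is that $(L^p(\mu,X))^*$ properly contains $L^q(\mu,X^*)$ whenever $X^*$ fails the Radon--Nikod\'ym property, so testing weak convergence only against $L^q(\mu,X^*)$ is insufficient. Against $L^q(\mu,X^*)$ alone one checks convergence of an $L^p$-bounded, weakly $L^1$-convergent sequence painlessly: by $L^p$-boundedness and density of $L^\infty(\mu,X^*)$ in $L^q(\mu,X^*)$ one reduces to bounded test functions, which lie in $L^1(\mu,X)^*$, where the $L^1$-weak limit does the job. The genuine difficulty is the singular part of the dual. Working with the intrinsic criterion, rather than with functionals, is precisely what sidesteps this obstacle, and reconciling the $L^1$- and $L^p$-forms of the criterion is the step I expect to require the most care.
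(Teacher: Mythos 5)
Your proof is correct and follows the route the paper itself indicates: the paper disposes of (i) by H\"older's inequality and of the non-trivial direction of (ii) by citing the characterizations of relative weak compactness in $L^1(\mu,X)$ and in $L^p(\mu,X)$ from \cite{die-alt2}, which is exactly the convex-block criterion you invoke; the only detail that genuinely needs writing out is the Fatou argument placing the a.e.\ limit in $L^p(\mu,X)$, and you have it right.

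One correction to your closing paragraph: the claim that ``a direct duality argument does not work'' is overstated, and in fact it dismisses the paper's \emph{other} suggested proof. The standard representation of the full duals (obtained via a lifting) describes every element of $L^p(\mu,X)^*$ by a weak$^*$-measurable function $\phi:\Omega\to X^*$ with $\|\phi(\cdot)\|_{X^*}$ measurable and in $L^q(\mu)$, and every element of $L^1(\mu,X)^*$ by such a $\phi$ with essentially bounded norm. With this representation, your truncation argument applies to all of $L^p(\mu,X)^*$ rather than only to $L^q(\mu,X^*)$: the truncation $\phi\,\chi_{\{\|\phi(\cdot)\|_{X^*}\le M\}}$ defines an element of $L^1(\mu,X)^*$, while the error term is at most $2\sup_{f\in H}\|f\|_{L^p(\mu,X)}$ times $\big\|\,\|\phi(\cdot)\|_{X^*}\chi_{\{\|\phi(\cdot)\|_{X^*}> M\}}\big\|_{L^q(\mu)}$, which tends to $0$ as $M\to\infty$ precisely because $q<\infty$. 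So the failure of the Radon--Nikod\'ym property in $X^*$ only invalidates the naive identification $L^p(\mu,X)^*\cong L^q(\mu,X^*)$; it does not obstruct the duality proof, which is what the paper means by ``the standard representation of $L^1(\mu,X)^*$ and $L^p(\mu,X)^*$ as spaces of vector-valued functions.''
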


Statement~(i) is a simple consequence of H{\"o}lder's inequality. The non-trivial implication of~(ii) can be proved easily by using the standard representation of $L^1(\mu,X)^*$
and $L^p(\mu,X)^*$ as spaces of vector-valued functions.
It also follows immediately from the characterizations of relative weak compactness in~$L^1(\mu,X)$
and $L^p(\mu,X)$ given in~\cite{die-alt2}. In operator theoretic terms, (ii) means that $j_p$ is {\em tauberian}
(see \cite[Section~3.4]{gon-abe}).  

A set $K \sub L^p(\mu,X)$ is said to be a {\em $\delta\mathcal{S}$-set} (see \cite[Section~3]{die5}) if it is bounded and
for every $\delta>0$ there is a weakly compact set $W \sub X$ such that
$\mu(f^{-1}(W))\geq 1-\delta$ for every $f\in K$. By Fact~\ref{fact:Moving}(i), this is equivalent to saying
that $K$ is bounded and $j_p(K)$ is a $\delta\cS$-set of~$L^1(\mu,X)$.
From Fact~\ref{fact:Moving}(ii) and Theorem~\ref{theo:Diestel} it follows at once that
{\em every $\delta\cS$-set of~$L^p(\mu,X)$ is relatively weakly compact}.

\begin{cor}\label{cor:DeltaS-p}
$X$ has property~($\delta\cS_\mu$) if and only if every relatively weakly compact subset of~$L^p(\mu,X)$ is a $\delta\cS$-set. 
\end{cor}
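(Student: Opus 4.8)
The plan is to move between $L^p(\mu,X)$ and $L^1(\mu,X)$ via the tauberian inclusion $j_p$, using the two parts of Fact~\ref{fact:Moving} together with the definition of a $\delta\cS$-set in $L^p(\mu,X)$, and to treat the two implications separately.

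For the ``only if'' direction, suppose $X$ has property~($\delta\cS_\mu$) and let $K \sub L^p(\mu,X)$ be relatively weakly compact. Then $K$ is bounded, so Fact~\ref{fact:Moving}(ii) gives that $j_p(K)$ is relatively weakly compact in $L^1(\mu,X)$; by property~($\delta\cS_\mu$) it is therefore a $\delta\cS$-set of $L^1(\mu,X)$. Since $K$ is bounded and $j_p(K)$ is a $\delta\cS$-set, the definition of a $\delta\cS$-set in $L^p(\mu,X)$ shows at once that $K$ is a $\delta\cS$-set. This direction is essentially a bookkeeping exercise.

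For the ``if'' direction, the key reduction is Theorem~\ref{theo:UniformlyBounded}: to prove that $X$ has property~($\delta\cS_\mu$) it suffices to check that every \emph{uniformly bounded} relatively weakly compact set $K \sub L^1(\mu,X)$ is a $\delta\cS$-set. I would exploit that such a $K$ already sits inside $L^p(\mu,X)$. Indeed, if $\|f(\cdot)\|_X \le C$ $\mu$-a.e. for every $f\in K$, then (using that $\mu$ is a probability measure) $\|f\|_{L^p(\mu,X)}\le C$, so $K$ is a bounded subset of $L^p(\mu,X)$ and $j_p$ acts on it as the identity, i.e. $j_p(K)=K$ as subsets of $L^1(\mu,X)$. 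Since $K=j_p(K)$ is relatively weakly compact in $L^1(\mu,X)$, Fact~\ref{fact:Moving}(ii) shows that $K$ is relatively weakly compact \emph{in $L^p(\mu,X)$}. The hypothesis then applies and yields that $K$ is a $\delta\cS$-set of $L^p(\mu,X)$, which by definition means precisely that $j_p(K)=K$ is a $\delta\cS$-set of $L^1(\mu,X)$, exactly as Theorem~\ref{theo:UniformlyBounded} requires.

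The whole argument is short once the right pieces are assembled, so there is no serious analytic obstacle. The one point deserving care — and the place where a naive approach would fail — is the ``if'' direction: a general relatively weakly compact set in $L^1(\mu,X)$ need not lie in $L^p(\mu,X)$ at all, so one cannot feed it directly into the $L^p$-hypothesis. The role of Theorem~\ref{theo:UniformlyBounded} is precisely to reduce to the uniformly bounded case, where membership in $L^p(\mu,X)$ is automatic and the identification $j_p(K)=K$ makes Fact~\ref{fact:Moving}(ii) and the definition of $\delta\cS$-set interlock cleanly.
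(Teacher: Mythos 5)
Your proof is correct and follows essentially the same route as the paper's: the ``only if'' part via Fact~\ref{fact:Moving}(ii) and the definition of $\delta\cS$-sets in $L^p(\mu,X)$, and the ``if'' part by reducing to uniformly bounded sets with Theorem~\ref{theo:UniformlyBounded}, identifying such a set with a bounded subset of $L^p(\mu,X)$, and applying Fact~\ref{fact:Moving}(ii) together with the hypothesis. The only difference is that you spell out the ``only if'' direction, which the paper dismisses as immediate.
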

\begin{proof}
The ``only if'' part is immediate. Conversely, suppose that every relatively weakly compact subset of~$L^p(\mu,X)$ is a $\delta\cS$-set.
We next check that $X$ has property~($\delta\cS_\mu$) by testing on uniformly bounded sets (Theorem~\ref{theo:UniformlyBounded}).
Let $K$ be a uniformly bounded relatively weakly compact subset of~$L^1(\mu,X)$. Since $K$ is uniformly bounded, it can be 
identified with a bounded subset $\tilde{K}$ of~$L^p(\mu,X)$, i.e. $j_p(\tilde{K})=K$. 
By Fact~\ref{fact:Moving}(ii), $\tilde{K}$ is relatively weakly compact in $L^p(\mu,X)$
and so it is a $\delta\cS$-set. Hence $K$ is a $\delta\cS$-set of $L^1(\mu,X)$. 
\end{proof}

It is worth pointing out that, in general, the implication (i)$\impli$(ii) in Proposition~\ref{pro:GrothendieckStable}
does not work for~$L^p(\mu,X)$. More precisely:

\begin{rem}\label{rem:NoGrothendieck}
For each weakly compact set $W \sub X$, let
$$
	\tilde{L}(W):=\{f\in L^p(\mu,X): \, f(\omega)\in W \mbox{ for }\mu\mbox{-a.e. }\omega\in \Omega\}.
$$
Suppose that the following condition holds:
\begin{enumerate}
\item[(*)] For every $\delta\mathcal{S}$-set $K \sub L^p(\mu,X)$ and every $\epsilon>0$ there is a weakly compact set $W \sub X$ such that
$K \sub \tilde{L}(W)+\epsilon B_{L^p(\mu,X)}$.
\end{enumerate}
Then we have:
\begin{enumerate}
\item[(i)] If $X$ is SWCG, then the family of all $\delta\cS$-sets of $L^p(\mu,X)$ is strongly generated 
(imitate the argument in \cite[proof of Theorem~2.7]{rod13}). 
\item[(ii)] Therefore, if $X$ is SWCG and has property~($\delta\cS_\mu$), then $L^p(\mu,X)$ is SWCG. 
\end{enumerate}
Suppose in addition that $L^1(\mu)$ is infinite-dimensional. In this case,  
$L^p(\mu,X)$ is SWCG if and only if $X$ is reflexive (see \cite[Corollary~3.7]{rod13}). 
Summing up, we conclude that condition~(*) fails for spaces like $X=L^1[0,1]$ or $X=\ell^1$.
\end{rem}

\subsection*{Acknowledgements}
Research supported by Ministerio de Econom\'{i}a y Competitividad and 
FEDER (project MTM2014-54182-P).
This work was also supported by the research project 19275/PI/14 funded by Fundaci\'{o}n S\'{e}neca - Agencia de Ciencia y Tecnolog\'{i}a 
de la Regi\'{o}n de Murcia within the framework of PCTIRM 2011-2014.


\def\cprime{$'$}\def\cdprime{$''$}
  \def\polhk#1{\setbox0=\hbox{#1}{\ooalign{\hidewidth
  \lower1.5ex\hbox{`}\hidewidth\crcr\unhbox0}}} \def\cprime{$'$}
\providecommand{\bysame}{\leavevmode\hbox to3em{\hrulefill}\thinspace}
\providecommand{\MR}{\relax\ifhmode\unskip\space\fi MR }
\providecommand{\MRhref}[2]{%
  \href{http://www.ams.org/mathscinet-getitem?mr=#1}{#2}
}
\providecommand{\href}[2]{#2}


\bibliographystyle{amsplain}

\end{document}